\documentclass[11pt]{amsart}
\usepackage{color}
\usepackage{graphicx}
\usepackage{microtype}
\usepackage[colorlinks]{hyperref}

\newtheorem{thm}{Theorem}[section]
\newtheorem{lem}[thm]{Lemma}
\newtheorem{prop}[thm]{Proposition}

\theoremstyle{definition}
\newtheorem{defn}[thm]{Definition}

\theoremstyle{definition}
\newtheorem{ex}[thm]{Example}
\newtheorem{rmk}[thm]{Remark}

\newcommand\cO{\mathcal O}
\newcommand\defi[1]{\emph{#1}}

\newcommand\disjoint{\amalg}
\newcommand\FF{\mathbb F}
\newcommand\Gal{\operatorname{Gal}}

\newcommand\isom{\cong}

\newcommand\PGL{\operatorname{PGL}}
\newcommand\PP{\mathbb P}

\newcommand\Spec{\operatorname{Spec}}

\newcommand\X{\mathfrak X}
\newcommand\ZZ{\mathbb Z}

\title{Lifting matroid divisors on tropical curves}
\author{Dustin Cartwright}
\address{Department of Mathematics \\ University of Tennessee \\
    227 Ayres Hall \\ Knoxville, TN 37996-1320}
\email{cartwright@utk.edu}

\begin{document}
\begin{abstract}
Tropical geometry gives a bound on the ranks of divisors on curves in terms
of the combinatorics of the dual graph of a degeneration. We show that for a
family of examples, curves realizing this bound might only exist over certain
characteristics or over certain fields of definition. Our examples also apply to
the theory of metrized complexes and weighted graphs. These examples arise by
relating the lifting problem to matroid realizability. We also give a proof of
Mn\"ev universality with explicit bounds on the size of the matroid, which may
be of independent interest. 
\end{abstract}

\maketitle

\section{Introduction}\label{sec:introduction}

The specialization inequality in tropical geometry gives an upper bound for the
rank of a divisor on a curve in terms of a combinatorial quantity known as the
rank of the specialization of the divisor on the dual graph of the special fiber
of a degeneration~\cite{baker}. This bound can be sharpened by incorporating
additional information about the components of the special fiber, giving
augmented graphs~\cite{amini-caporaso} or metrized complexes~\cite{amini-baker}.
All of these inequalities can be strict because there may be many algebraic
curves and divisors with the same specialization. Thus, the natural question is
whether, for a given graph and divisor on that graph, the inequality is sharp
for \emph{some} algebraic curve and divisor. If $R$ is the discrete valuation
ring over which the degeneration of the curve is defined, we will refer to such
a curve and divisor
as a \defi{lifting} of the graph with its divisor over~$R$.
In this paper, we show that the
existence of a lifting can depend strongly on the characteristic of the field:

\begin{thm}\label{thm:characteristic}
Let $P$ be any finite set of prime numbers. Then there exist graphs $\Gamma$ and
$\Gamma'$ with rank~$2$ divisors $D$ on~$\Gamma$ and~$D'$ on~$\Gamma'$ with the
following property:
For any infinite field~$k$, $\Gamma$ and~$D$ lift over $k[[t]]$ if and only
if the characteristic of~$k$ is in~$P$, and $\Gamma'$ and~$D'$ lift over
$k[[t]]$ if and only if the characteristic of~$k$ is not in~$P$.
\end{thm}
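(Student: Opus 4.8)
The plan is to separate the proof of Theorem~\ref{thm:characteristic} into two largely independent ingredients: a \emph{gadget construction} that turns an arbitrary rank-$3$ matroid $M$ into a graph $\Gamma_M$ carrying a rank-$2$ divisor $D_M$, in such a way that the pair lifts over $k[[t]]$ exactly when $M$ is realizable over $k$; and an \emph{effective universality} statement producing matroids whose realizability over a field is governed precisely by the characteristic.

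For the gadget, I would exploit that a divisor of rank $2$ on a curve $C$ over $K=k((t))$ is, after choosing a basis for its complete linear system, the same thing as a morphism $\phi\colon C\to\PP^2$, and that $\PP^2$ itself tropicalizes to a ``frame'' to which combinatorial constraints can be attached. Starting from a rank-$3$ matroid $M$ on ground set $E$, I would build $\Gamma_M$ out of such a frame together with one small \defi{point subgraph} per element of $E$ and one \defi{dependency subgraph} per circuit of $M$, and let $D_M$ be the pullback of the hyperplane class along the frame. By design $\operatorname{rk}_{\Gamma_M}(D_M)\ge 2$ for combinatorial reasons, so a lift is a smooth curve $C/K$ with a divisor $\widetilde D$ of rank exactly $2$ whose specialization is $D_M$ --- equivalently, a map $\phi\colon C\to\PP^2$ whose tropicalization is compatible with the frame. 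The point subgraphs then cut out marked points of $C$ whose $\phi$-images are genuine points of $\PP^2(k)$, the dependency subgraphs force exactly the collinearities recorded by the circuits of $M$, and no other constraint is imposed; hence $(\Gamma_M,D_M)$ lifts over $k[[t]]$ if and only if $M$ admits a realization over $k$ (for our purposes it will not matter whether the gadget detects realizability over $k$ itself or over $\bar k$). The backward implication is proved by explicitly smoothing over $k[[t]]$ a nodal plane curve assembled from a realization of $M$; the forward implication is the rigidity analysis just sketched.

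It remains to produce, for a finite set of primes $P$, a rank-$3$ matroid realizable over $k$ exactly when $\operatorname{char} k\in P$, and another realizable over $k$ exactly when $\operatorname{char} k\notin P$. This is where the universality theorem is used: for any scheme $X$ of finite type over $\ZZ$ there is a matroid $M_X$ --- of size bounded explicitly in terms of a presentation of $X$ --- whose realization space is stably equivalent to $X$, so in particular $M_X$ is realizable over a field $k$ if and only if $X(k)\neq\emptyset$. Taking $X=\coprod_{p\in P}\Spec\FF_p$ gives a matroid realizable over $k$ precisely when $\operatorname{char} k\in P$, and taking $X=\Spec\ZZ[1/N]$ with $N=\prod_{p\in P}p$ gives one realizable precisely when $\operatorname{char} k\notin P$; in both cases $X(k)\neq\emptyset$ is equivalent to $X(\bar k)\neq\emptyset$, so the ambiguity of over-$k$ versus over-$\bar k$ realizability is harmless. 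Plugging these two matroids into the gadget construction yields the desired $\Gamma, D$ and $\Gamma', D'$, completing the proof of Theorem~\ref{thm:characteristic}; the explicit size bound, while not needed for the bare existence statement, is recorded because it is of independent interest.

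The main obstacle is the rigidity direction of the gadget: showing that \emph{every} lift of $(\Gamma_M,D_M)$ forces a realization of $M$ on the nose, rather than a realization of some coarser matroid. This demands careful control of the tropicalization of $\phi\colon C\to\PP^2$ along the components of the special fiber --- excluding contracted components, collisions of the marked points, and escapes into the boundary of $\PP^2$ --- together with a verification that each dependency subgraph imposes its collinearity and nothing more. A secondary difficulty is proving the universality statement itself, and in particular making it effective enough to bound the size of $M_X$, hence of $\Gamma$.
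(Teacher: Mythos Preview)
Your overall strategy---build a gadget $(\Gamma_M, D_M)$ from a rank-$3$ matroid $M$ so that lifting over $k[[t]]$ detects realizability over $k$, then apply Mn\"ev universality to the schemes $\coprod_{p\in P}\Spec\FF_p$ and $\Spec\ZZ[1/N]$---is exactly the paper's, and your choice of schemes is how one deduces Theorem~\ref{thm:characteristic} from Theorem~\ref{thm:universality}. But the gadget you sketch (a tropical $\PP^2$ frame, point subgraphs per element, dependency subgraphs per circuit) is neither the paper's construction nor specified concretely enough to evaluate; the ``main obstacle'' you name at the end is the entire content of the argument and is left untouched.

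The paper's gadget is much simpler than what you propose: $\Gamma_M$ is the bipartite incidence graph between the elements and the rank-$2$ flats of $M$ (the Levi graph), and $D_M$ is the sum of the element vertices. That $D_M$ has rank $2$ is checked by explicit chip-firing and Dhar's burning algorithm (Proposition~\ref{prop:rank}). The rigidity direction (Theorem~\ref{thm:matroid-lifting}) restricts the complete linear series of a lift to the special fiber, obtaining a map $\X_0\to\PP^2_k$; a short argument shows each element-component maps isomorphically to a line and each flat-component contracts to a point, producing a dual realization of $M$. The construction direction (Theorem~\ref{thm:construction}) blows up $\PP^2_R$ along a realization and finds a regular semistable family as a divisor on the blow-up; there is no smoothing of an ad hoc nodal plane curve. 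Finally, your remark that it ``will not matter whether the gadget detects realizability over $k$ itself or over $\bar k$'' is too quick: rigidity in fact only yields a Galois-invariant realization over an extension of $k$, and the paper closes this gap by enlarging $M$ to break its automorphisms (Lemma~\ref{lem:break-symmetry}), after which the two notions coincide for infinite $k$.
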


We also show that the existence of a lift depends on the field even beyond
characteristic:

\begin{thm}\label{thm:number-field}
Let $k'$ be any number field. Then there exists a graph~$\Gamma$ with a rank~$2$
divisor~$D$ such that for any field~$k$ of characteristic~$0$, $\Gamma$ and~$D$
lift over~$k[[t]]$ if and only if $k$ contains $k'$.
\end{thm}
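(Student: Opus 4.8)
The plan is to reduce the lifting problem to a question about realizability of a matroid over the field $k$, and then to invoke a universality-type statement (Mnëv universality, as promised in the abstract) to produce matroids whose realization spaces have prescribed arithmetic. Concretely, I expect the paper to construct, from a suitable point configuration or matroid $M$, a graph $\Gamma$ together with a rank $2$ divisor $D$ such that liftings of $(\Gamma, D)$ over $k[[t]]$ correspond (perhaps up to the usual $\PGL$ ambiguity and choices of coordinates) to realizations of $M$ over $k$. The rank $2$ hypothesis is natural here: a rank $2$ divisor gives a map to $\PP^2$, and the combinatorics of how the images of marked points on the components of the special fiber line up — which points become collinear in the limit — is exactly the data of a matroid on the ground set of marked points. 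So the core translation should read: $(\Gamma, D)$ lifts over $k[[t]]$ $\iff$ $M$ is realizable over $k$ (or over some finite extension controlled by $M$).

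**The key steps**, in order, would be: (1) State precisely the correspondence between liftings of a graph-with-divisor and matroid realizations — this presumably is the technical heart of the paper, established in the sections between this excerpt and the proofs, using Baker's specialization machinery together with an analysis of when a divisor class of the right rank exists on a curve with the given dual graph. (2) Recall or prove the quantitative Mnëv universality statement: for any scheme $Z$ of finite type over $\Spec \ZZ$, there is a matroid $M$ whose realization space (over any field) is, up to stable equivalence, $Z$; one then picks $Z$ so that $Z(k) \neq \emptyset$ exactly when $k \supseteq k'$. For a number field $k' = \QQ(\alpha)$ with $\alpha$ a root of an irreducible polynomial $f \in \QQ[x]$, the natural choice is $Z = \Spec \QQ[x]/(f)$, or an affine model thereof over $\ZZ$; then $Z(k)$ is nonempty iff $f$ has a root in $k$ iff $k \supseteq k'$ (using that $k$ has characteristic $0$, so $\QQ \subseteq k$, and $\QQ(\alpha)$ is the unique subfield of this form). (3) Combine: feed this $M$ through the correspondence of step (1) to get $\Gamma$ and $D$, and check that the stable-equivalence moves relating $Z$ to the actual realization space do not affect the "nonempty over $k$" property.

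**The main obstacle** I anticipate is step (1): making the dictionary between lifting and realizability both precise and tight in both directions. The easy direction — a realization of $M$ gives a curve and divisor realizing the rank bound — should follow by explicitly writing down a nodal or smooth curve mapping to $\PP^2$ with the prescribed incidences and degenerating it appropriately. The hard direction — that \emph{every} lift forces a realization of $M$ over $k$ itself (not merely over the algebraic closure, and not over a ramified extension of $k[[t]]$) — requires controlling the field of definition carefully. One must rule out that a lift could exist using extra collinearities or coincidences not recorded in $M$, and one must ensure that the ambient $\PGL_3$ action can always be used to normalize a realization to be defined over $k$ rather than over a finite extension; this likely uses that the relevant configuration has trivial (or at least $k$-rational) automorphisms, which is something the quantitative universality construction should be arranged to guarantee. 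A secondary technical point is handling the "infinite field" hypothesis of Theorem~\ref{thm:characteristic} versus the characteristic $0$ hypothesis here: in the number field case one only needs to exclude positive characteristic to make $\QQ \subseteq k$ meaningful, so the statement is cleaner, but one should still confirm that no issues arise from $k$ being, say, a large transcendental extension of $\QQ$ — which again should be automatic once realizability is phrased as existence of a $k$-point on a fixed $\QQ$-scheme.
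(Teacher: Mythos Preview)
Your proposal is correct and follows essentially the same approach as the paper: Theorem~\ref{thm:number-field} is deduced from the universality statement (Theorem~\ref{thm:universality}) by taking $X$ to be an integral model of $\Spec \QQ[x]/(f)$ for $f$ the minimal polynomial of a primitive element of~$k'$, and Theorem~\ref{thm:universality} in turn is proved exactly via your steps (1)--(3), with the lifting--realizability dictionary given by Theorems~\ref{thm:matroid-lifting} and~\ref{thm:construction}. The field-of-definition obstacle you flag is handled precisely as you anticipate---the paper introduces the notion of a \emph{Galois-invariant realization} and then breaks the matroid's symmetries via Lemma~\ref{lem:break-symmetry} (adding generic points so that distinct flats have distinct sizes), rather than building rigidity into the Mn\"ev construction itself.
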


Both Theorem~\ref{thm:characteristic} and~\ref{thm:number-field} are
immediate consequences of the following:

\begin{thm}\label{thm:universality}
Let $X$ be a scheme of finite type over $\Spec \ZZ$. Then there exists a graph
$\Gamma$ with a rank~$2$ divisor~$D$ such that, for any infinite field $k$,
$\Gamma$ and~$D$ lift over $k[[t]]$ if and only if $X$ has a $k$-point.
\end{thm}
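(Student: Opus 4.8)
The plan is to deduce Theorem~\ref{thm:universality} from two reductions. The first is a form of Mn\"ev universality: for any scheme $X$ of finite type over $\Spec\ZZ$ there is a simple rank-$3$ matroid $M$ whose realization space $\mathcal R(M)$ --- the scheme over $\ZZ$ parametrizing configurations of the points of $M$ in $\PP^2$ that induce exactly the collinearities prescribed by $M$ --- is obtained from $X$ by a finite chain of maps that are affine-space bundles and localizations. In particular, for every infinite field $k$, the set $\mathcal R(M)(k)$ is nonempty if and only if $X(k)$ is: this is precisely where the hypothesis that $k$ is infinite enters, since a nonempty Zariski-open subset of $\mathbb A^n_k$ has a $k$-point exactly when $k$ is infinite, and each step of the chain either adjoins such a fiber or removes a proper closed subset. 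I would moreover track $|E(M)|$ quantitatively through the argument so as to obtain the explicit bounds advertised in the abstract; concretely this means encoding the polynomial equations cutting out $X$ by economical von Staudt-style addition and multiplication configurations inside a rank-$3$ matroid.

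The second, and harder, reduction is the \defi{matroid-divisor} construction: to a rank-$3$ matroid $M$ I would attach a graph $\Gamma = \Gamma_M$ with a degree-$d$, rank-$2$ divisor $D = D_M$, designed so that the liftings of $(\Gamma,D)$ over $k[[t]]$ are in bijection with the realizations of $M$ over $k$ (equivalently, with the $k$-points of $\mathcal R(M)$, possibly up to a harmless affine bundle). The mechanism is that a rank-$2$ divisor on a smooth curve $C$ over $k((t))$ is essentially a morphism $C\to\PP^2$ given by a sub-linear-system of $\cO_C(d)$; on a regular semistable model with dual graph $\Gamma_M$ this morphism, after the necessary modifications, contracts certain components of the special fiber to points of $\PP^2$ and maps others onto lines, and the combinatorics of $\Gamma_M$ is arranged so that the distinguished vertices associated to the ground set of $M$ go to points whose forced collinearities are exactly the rank-$2$ flats of $M$. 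Conversely, given a realization of $M$ I would write down an explicit one-parameter degeneration of a plane curve adapted to the configuration, equipped with the pullback of $\cO_{\PP^2}(1)$, and check that its dual graph is $\Gamma_M$ and the specialization of the divisor is $D_M$. Along the way one must verify the purely combinatorial fact that $r_{\Gamma_M}(D_M) = 2$ exactly --- large enough that the specialization inequality identifies $(\Gamma_M,D_M)$ as a candidate, and no larger, so that an actual lift is genuinely constrained --- and one may freely impose auxiliary open conditions (marked points distinct, the plane curve smooth and transverse to the relevant lines), since over an infinite field a nonempty open subvariety always has a rational point.

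The main obstacle is the faithfulness of this correspondence. Producing liftings from realizations is a construction, but the reverse implication requires controlling \emph{every} lift of $(\Gamma_M,D_M)$, not only the expected ones: one must show that the induced rational map to $\PP^2$ is nondegenerate (image not contained in a line, images of the marked points pairwise distinct) and that it forces no collinearities beyond those recorded by $M$, so that it really is a realization of $M$ rather than of a weaker matroid or a degeneration of one. This comes down to a detailed analysis of how rank-$2$ linear systems behave under specialization to $\Gamma_M$, carried out with Baker's specialization machinery~\cite{baker} together with the metrized-complex refinements of Amini and Baker~\cite{amini-baker}; the delicacy of designing $\Gamma_M$ so that this analysis goes through --- rigid enough to pin down the configuration, flexible enough to realize every realization --- is the crux of the proof. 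Once both reductions are in place, Theorem~\ref{thm:universality} follows by composing the two equivalences, and Theorems~\ref{thm:characteristic} and~\ref{thm:number-field} follow by choosing $X$ appropriately, as already indicated.
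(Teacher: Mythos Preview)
Your overall architecture matches the paper's: Mn\"ev universality produces a rank-$3$ matroid $M$ whose realizability over $k$ is equivalent to $X(k)\neq\varnothing$ for infinite $k$, and the matroid divisor $(\Gamma_M, D_M)$ --- simply the Levi graph of $M$ with the divisor supported on the element vertices --- is the graph--divisor pair whose lifting is to detect realizability. The two directions are the paper's Theorems~\ref{thm:matroid-lifting} and~\ref{thm:construction}, and the nondegeneracy analysis you flag as the crux is exactly the content of Theorem~\ref{thm:matroid-lifting}.

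There is, however, one genuine gap. A lifting over $k[[t]]$ does \emph{not} directly give a realization of $M$ over $k$: the dual graph is computed from the \emph{geometric} components of the special fiber, and the components of $\X_0$ need not be individually defined over $k$. What Theorem~\ref{thm:matroid-lifting} actually yields is only a \emph{Galois-invariant realization over an extension of~$k$} --- a configuration over some finite $k'\supset k$ on which $\Gal(k'/k)$ acts by permuting the points. When $M$ has nontrivial automorphisms this is strictly weaker than realizability over $k$; the Hesse configuration over $\RR$ is an example in the paper. The paper closes this gap with Lemma~\ref{lem:break-symmetry}: one enlarges $M$ to a matroid $M'$ by adding generic points on the flats until all flats have distinct cardinalities, so that no nontrivial Galois permutation is possible and any Galois-invariant realization of $M'$ is already defined over $k$. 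Your sketch omits this rigidification step entirely, and without it the equivalence you assert between liftings and realizations over $k$ fails.

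Two minor remarks. First, ``in bijection'' overstates what is proved or needed; only the equivalence of nonemptiness is established. Second, the design of $\Gamma_M$ is not delicate in the way you suggest --- it is just the incidence graph of $M$, and the work is in the analysis (Proposition~\ref{prop:rank}, Theorem~\ref{thm:matroid-lifting}), not in the choice of graph.
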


Theorems~\ref{thm:characteristic}, \ref{thm:number-field},
and~\ref{thm:universality} all apply equally well to divisors on weighted
graphs~\cite{amini-caporaso}
because the construction of a degeneration in Theorem~\ref{thm:universality}
uses curves of genus~$0$ in the special fiber and for such components, the
theory of weighted graphs agrees with unweighted graphs.

Moreover, these theorems also apply to the metrized complexes introduced
in~\cite{amini-baker}, which record the isomorphism types of the curves in the
special fiber. Again, for rational components, the rank of the metrized complex
will be the same as the rank for the underlying graph. For metrized complexes,
there is a more refined notion of a limit~$g^r_d$, which involves additionally
specifying vector spaces of rational functions at each vertex. Not every divisor
of degree~$d$ and rank~$r$ on a metrized complex lifts to a limit~$g^r_d$, but
the examples from the above theorems do:
\begin{prop}\label{prop:limit-grd}
Let $\Gamma$ and~$D$ be a graph and divisor constructed as in
Theorem~\ref{thm:universality}. Then for any lift of $\Gamma$ to a metrized
complex with rational components, there also exists a lift of~$D$ to a
limit~$g^2_d$.
\end{prop}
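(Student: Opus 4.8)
The plan is to trace through the construction of $\Gamma$ and $D$ in Theorem~\ref{thm:universality} and show that the extra data required for a limit $g^2_d$ — a two-dimensional space of rational functions on each component of the metrized complex, compatible with the divisor $D$ along the edges — can always be produced, regardless of which lift to a metrized complex with rational components we are handed. The key point to exploit is that every component is a $\PP^1$ (genus~$0$), so on each component the relevant linear systems are completely understood: on $\PP^1$ a divisor of degree $e$ has rank exactly $\max(e,-1)+1$ ranks behaving as expected, and any partial linear system extends. Thus the obstruction to assembling local linear systems into a global limit $g^2_d$ is purely the combinatorial compatibility at the nodes, which is governed by the reduced divisors / rank computation already carried out on $\Gamma$ itself.

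First I would recall the precise shape of $D$ on $\Gamma$: in the construction it is (up to linear equivalence) a sum of the fixed "base points" plus the combinatorial data encoding the matroid, and its rank on $\Gamma$ equals $2$ essentially because one can always play the chip-firing game to reach any two prescribed vertices. Next, for a given metrized complex lift $\mathfrak{C}$ with rational components $C_v \isom \PP^1$, I would choose on each $C_v$ the unique-up-to-scaling rational functions dictated by the winning moves of the rank-$2$ game on $\Gamma$: for each vertex $w$ one wins the game $D - (x) - (y)$ for points $x,y$ distributed among the vertices, and the sequence of chip-firing moves on $\Gamma$ tells us, component by component, which effective divisor class of the right degree to use on $C_v$. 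Because $C_v$ is rational, the space of rational functions with the prescribed orders of vanishing at the finitely many node-points of $C_v$ is nonempty and can be chosen to have dimension $2$ (adding a constant function if necessary, since $1 \in H^0$ always), and I would take the span of these functions as the vector space $H_v$ attached to $v$.

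The main step — and the one I expect to be the principal obstacle — is verifying the \emph{compatibility condition} of \cite{amini-baker} for a limit $g^2_d$: at each edge of $\Gamma$, the orders of vanishing at the two node-points on the two incident components, read off from $H_v$ and $H_{v'}$, must agree with the slopes of a rational function in the combinatorial linear system $|D|$ along that edge, and moreover the ``ranks'' must match up so that the glued system genuinely has projective dimension $2$. Here I would argue that the chip-firing description makes this automatic: the slope of the piecewise-linear function achieving $D \sim D - (x) - (y) + (\text{effective})$ along an edge equals precisely the difference in the coefficients of the corresponding functions in $H_v$ and $H_{v'}$ at the shared node, by construction. The only thing to check is that the \emph{two-dimensionality} is not lost in the gluing — i.e., that there is no unexpected collapse — and this follows because the rank of $D$ on $\Gamma$ is exactly $2$ (not more), so the combinatorial rank already certifies that the glued linear system has the right dimension; by the general inequality of \cite{amini-baker} relating the rank of a limit $g^r_d$ to the combinatorial rank, together with the fact that on genus-$0$ components no rank is lost, the assembled data $(D, \{H_v\})$ is a limit $g^2_d$. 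I would close by remarking that the same argument shows the lift is compatible with whichever algebraic lift of $\Gamma$ to a curve produced the metrized complex, so Proposition~\ref{prop:limit-grd} holds for every such lift.
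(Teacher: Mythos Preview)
Your proposal has a genuine gap: it defers the actual construction and verification to general principles that do not hold in the form you need.

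First, a minor but telling slip: a limit $g^2_d$ requires a \emph{three}-dimensional vector space $H_v$ on each component (rank $r$ means $\dim H_v = r+1$), not a two-dimensional one as you write twice.

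More seriously, the core of your argument is the claim that ``on genus-$0$ components no rank is lost,'' combined with an appeal to an Amini--Baker inequality, so that the combinatorial rank of~$D$ on~$\Gamma$ automatically certifies that \emph{some} choice of $\{H_v\}$ gives a limit $g^2_d$. This is not how the theory works. The refined rank of $(D,\{H_v\})$ depends on the specific spaces $H_v$, and even on $\PP^1$ components a poor choice of $H_v$ (for instance, one whose allowed poles sit at the wrong node points) will yield refined rank strictly less than~$2$. There is no general theorem asserting that on totally degenerate metrized complexes the Baker--Norine rank can always be upgraded to a limit linear series of the same rank; producing such an upgrade in this particular case is exactly the content of the proposition.

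What the paper does instead is make explicit, concrete choices and then verify them. For each flat~$f$ it fixes two elements of~$f$ and takes $H_f$ to be the rational functions on $C_f$ with at worst simple poles at the two corresponding node points; for each element~$e$ it fixes one flat through~$e$ and takes $H_e$ to be the functions with at worst simple poles at that node point and at the lifted divisor point. Each space is visibly $3$-dimensional. The paper then revisits the three chip-firing ``toolkit'' moves from the proof that $r(D_M)=2$ and, for each move and each affected vertex~$v$, exhibits a rational function in the chosen $H_v$ whose divisor on $C_v$ matches the slopes of the tropical function along the incident edges. This case-by-case matching is the actual substance of the proof; your outline gestures at it (``the sequence of chip-firing moves on $\Gamma$ tells us, component by component, which effective divisor class \ldots to use'') but never carries it out, and the closing appeal to a nonexistent automatic-lifting principle cannot substitute for it.
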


If we were to consider divisors of rank~$1$ rather than rank~$2$, \cite{abbr2}
provides a general theory for lifting. They prove that if a rank~$1$ divisor can
be lifted to a tame harmonic morphism with target a genus~$0$ metrized complex,
then it lifts to a rank~$1$ divisor an algebraic curve. Moreover, the converse
is true except for possibly some cases of wild ramification in positive
characteristic. Using this, they give examples of rank~$1$ divisors which do not
lift over any discrete valuation ring~\cite[Sec.~5]{abbr2}. While the existence
of a tame harmonic morphism depends on the characteristic, the dependence is
only when the characteristic is at most the degree of the
divisor~\cite[Rmk.~3.9]{abbr2}. In contrast, lifting rank~$2$ divisors can
depend on the characteristic even when the characteristic is bigger than the
degree:
\begin{thm}\label{thm:quantitative-characteristic}
If $P = \{p\}$ where $p \geq 443$ is prime, then the divisors~$D$ and~$D'$ in
Theorem~\ref{thm:characteristic} can be taken to have degree less than~$p$.
\end{thm}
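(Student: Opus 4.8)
The plan is to make the construction behind Theorem~\ref{thm:universality} quantitative and then to feed it the cheapest possible inputs. Recall that Theorem~\ref{thm:characteristic} for $P=\{p\}$ is deduced from Theorem~\ref{thm:universality} by taking, for $\Gamma$ and $D$, the scheme $X=\Spec\FF_p=\Spec\ZZ/(p)$ (whose $k$-points are nonempty exactly when $\operatorname{char}k=p$), and, for $\Gamma'$ and $D'$, the scheme $X'=\Spec\ZZ[1/p]=\Spec\ZZ[w]/(pw-1)$ (whose $k$-points are nonempty exactly when $\operatorname{char}k\ne p$). In both presentations the only datum that grows with $p$ is the integer $p$ appearing as a coefficient, so the whole argument reduces to three estimates: (i) how many elements the matroid built by our quantitative Mn\"ev universality has, in terms of the bit-length of $p$; (ii) how the degree of the divisor produced by Theorem~\ref{thm:universality} depends on the number of matroid elements; and (iii) the resulting closed-form inequality, which should hold precisely for $p\ge 443$.

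For (i), the point is that the constant $p$ can be produced from $0$ and $1$ by $O(\log p)$ ring operations -- e.g.\ by repeated doubling following the binary expansion of $p$, or slightly more economically via a short addition chain -- and our version of Mn\"ev universality (the one with explicit size bounds on the matroid) realizes each addition or multiplication gate by adjoining only a bounded number of points to a point configuration, using the classical von Staudt constructions together with a fixed projective frame. Hence the matroid $M$ attached to $X$, and likewise the matroid $M'$ attached to $X'$ (where the extra variable $w$ and the single relation $pw-1$ cost only $O(1)$ more), have $O(\log p)$ elements with an explicit constant.

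For (ii), I would revisit the proof of Theorem~\ref{thm:universality}: the graph $\Gamma$ is assembled from a bounded-size gadget for each element of $M$ and for the incidences recording its dependencies, and the rank-$2$ divisor $D$ is placed on these gadgets with total degree bounded by an explicit polynomial -- indeed, I expect, a linear function -- of $\lvert M\rvert$. Combining with (i) gives $\degree D=O(\log p)$ and $\degree D'=O(\log p)$ with explicit constants. Since $\log p$ is eventually far smaller than $p$, the bound $\degree D<p$ holds for all sufficiently large $p$; carrying the constants through gives the threshold $p\ge 443$, and nothing further need be checked, since all primes below $443$ are excluded by hypothesis. That $\Gamma,\Gamma'$ are graphs, that $D,D'$ have rank~$2$, and that the lifting criterion holds for every infinite field are all inherited verbatim from Theorem~\ref{thm:universality}, so no new geometric input is needed.

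The hard part will be the bookkeeping in the two reductions. In the Mn\"ev step one must write down the von Staudt gadgets explicitly, check that they force no unintended collinearities or dependencies, and count points and flats carefully; in the passage from $M$ to $(\Gamma,D)$ one must extract from the proof of Theorem~\ref{thm:universality} the exact dependence of $\degree D$ on $\lvert M\rvert$ rather than merely its finiteness. Everything else is an explicit, if tedious, computation, and the number $443$ is simply where the inequality from (iii) first becomes true.
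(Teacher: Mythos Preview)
Your overall architecture matches the paper's: apply a quantitative Mn\"ev theorem to $\Spec\ZZ/p$ and $\Spec\ZZ[1/p]$, then invoke Theorems~\ref{thm:matroid-lifting} and~\ref{thm:construction}. Step~(ii) is in fact trivial --- $D_M$ is by construction the sum of the element-vertices of~$\Gamma_M$, so $\deg D_M$ equals the number of elements of~$M$ on the nose, with no further gadgets to count.

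Where you diverge from the paper is in step~(i). The paper's Mn\"ev theorem (Theorem~\ref{thm:mnev}) consumes an \emph{elementary monic representation} (Definition~\ref{def:elementary-monic}), in which every intermediate variable must be monic in an auxiliary parameter~$t$; this monicity is what guarantees that for generic~$t$ all intermediate values are distinct, so that the realization space surjects onto $\Spec R$. Naive binary doubling $x\mapsto x+x$ destroys monicity, so your addition-chain idea is not directly available inside this framework, and you do not address this. The paper instead writes $p$ as $\ell^2$ plus a remainder with $\ell=\lfloor\sqrt p\rfloor$: build $t+\ell$ by $\ell$ increments, square it, then increment $p-\ell^2$ more times. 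This gives an explicit $O(\sqrt p)$-step representation (Figure~\ref{fig:elem-monic}), and the formula of Theorem~\ref{thm:mnev} yields $\lvert M\rvert = 7(\ell+p-\ell^2)+64$; the inequality $\lvert M\rvert < p$ is then verified by hand, and $443$ is precisely where it first holds.

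Your $O(\log p)$ proposal can in fact be salvaged within the monic framework --- one interleaves additions with suitable powers of~$t$ so that summands always have distinct $t$-degrees, and runs a parallel chain without the increments so that the two outputs differ by exactly the constant~$p$ --- and carried through it would give a \emph{smaller} threshold than~$443$. But as written you have not done any of this, and your assertion that ``carrying the constants through gives the threshold $p\ge 443$'' is inconsistent with an $O(\log p)$ count: that number is an artifact of the paper's $\sqrt p$ construction. So the sketch has the right shape, but the substantive step --- actually building the representation and doing the arithmetic that produces the stated bound --- is missing.
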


For simplicity, we have stated Theorems~\ref{thm:characteristic},
\ref{thm:number-field}, and~\ref{thm:universality} in terms of liftings over
rings of formal power series, but some of our results also apply to other discrete
valuation rings. In particular, these theorems apply verbatim with $k[[t]]$
replaced by any DVR which contains its residue field~$k$. For other, possibly
even mixed characteristic DVRs, we have separate necessary and sufficient
conditions in Theorems~\ref{thm:matroid-lifting} and~\ref{thm:construction}
respectively.

The proof of Theorem~\ref{thm:universality} and its consequences use Mn\"ev's
universality theorem for matroids~\cite{mnev}. Matroids are combinatorial
abstractions of vector configurations in linear algebra. However, not all
matroids come from vector configurations and those that do are called
realizable. Mn\"ev proved that realizability problems for rank~$3$ matroids in
characteristic~$0$ can encode arbitrary systems of integral polynomial equations
and Lafforgue extended this to arbitrary
characteristic~\cite[Thm.~1.14]{lafforgue}. Thus, Theorem~\ref{thm:universality}
follows from universality for matroids together with a connection between
matroid realizability and lifting problems, which is done in
Theorems~\ref{thm:matroid-lifting} and~\ref{thm:construction}. We also give a
proof of universality in arbitrary characteristic with explicit bounds on the
size of the matroid in order to establish
Theorem~\ref{thm:quantitative-characteristic}.

Matroids have appeared before in tropical geometry and especially as
obstructions for lifting. For example, matroids yield examples of
matrices whose Kapranov rank exceeds their tropical rank, showing that the
minors do not form a tropical basis~\cite[Sec.~7]{develin-santos-sturmfels}. In
addition, Ardila and Klivans defined the tropical linear space
for any simple matroid, which generalizes the tropicalization of a linear
space~\cite{ardila-klivans}. The tropical linear spaces are
realizable as the tropicalization of an algebraic variety if and only if the
matroid is realizable~\cite[Cor.~1.5]{katz-payne}. This paper is only concerned
with rank~$3$ matroids, which correspond to $2$-dimensional fans and the
graphs for which we construct lifting obstructions are the links of the fine
subdivision of the tropical linear space (the fine subdivision is defined in
\cite[Sec.~3]{ardila-klivans}).

Moreover, the matroid divisors from this paper have found applications to other
questions regarding the divisor theory of graphs. David Jensen has shown that
the matroid divisor of the Fano matroid gives an example of a $2$-connected
graph which is not Brill-Noether general for any metric
parameters~\cite{jensen}. In addition to the Baker-Norine rank of a divisor used
in this paper, Caporaso has given a definition of the \defi{algebraic rank} of a
divisor, which involves quantifying over all curves over a given
field~\cite{caporaso}.
Yoav Len has shown that in contrast to the results in
Section~\ref{sec:divisors}, the algebraic rank of a matroid divisor detects
realizability of the matroid, and he has used this to show that the algebraic
rank can depend on the field~\cite{len}.

Since rank~$3$ matroids give obstructions to lifting rank~$2$ divisors on
graphs, it is natural to wonder if higher rank matroids give similar examples
for lifting higher rank divisors.
While we certainly expect there to be results similar to
Theorems~\ref{thm:characteristic}, \ref{thm:number-field},
and~\ref{thm:quantitative-characteristic} for divisors on graphs which have
ranks greater than~$2$, it is not clear that higher rank matroids would provide
such examples, or even what the right encoding of the matroid in a graph would
be. From a combinatorial perspective, our graphs are just order complexes of the
lattice of flats, but for higher rank matroids, the order complex is a
simplicial complex but not a graph.

This paper is organized as follows. In Section~\ref{sec:divisors}, we introduce
the matroid divisors which are our key class of examples and show that as combinatorial
objects they behave as if they should have rank~$2$. In Section~\ref{sec:lifting}, we relate
the lifting of matroid divisors to the realizability of the matroid.
Section~\ref{sec:brill-noether} looks at the applicability of our matroid to the
question of lifting tropically Brill-Noether general divisors and shows that,
with a few exceptions,
matroid divisors are not Brill-Noether general. Finally,
Section~\ref{sec:mnev} provides a quantitative proof of Mn\"ev universality as the
basis for Theorem~\ref{thm:quantitative-characteristic}.

\subsection*{Acknowledgments}
I would like to thank Spencer Backman, Melody Chan, Alex Fink, Eric Katz, Yoav
Len, Diane Maclagan, Sam Payne, Kristin Shaw, and Ravi Vakil for helpful
comments on this project. The
project was begun while the author was supported by NSF grant DMS-1103856L.

\section{Matroid divisors}\label{sec:divisors}

In this section, we construct the divisors and graphs that are used in
Theorem~\ref{thm:universality}. As in~\cite{baker} and~\cite{baker-norine-rr},
we will refer to a finite formal sum of the vertices of a graph as a
\defi{divisor} on that graph. Divisors are related by so-called ``chip-firing
moves'' in which the weight at one vertex is decreased by its degree and those
of its neighbors are correspondingly each increased by~$1$. A reverse
chip-firing move is the inverse operation.

As explained in the introduction, the starting point in our construction is a
rank~$3$ simple matroid. A matroid is a combinatorial model for an arrangement
of vectors, called elements, in a vector space. A rank~$3$ simple matroid
corresponds to such an arrangement in a $3$-dimensional vector space, for which
no two vectors are multiples of each other. There are many equivalent
descriptions of a matroid, but we will work with the flats, which correspond to
vector spaces spanned by subsets of the arrangement, and are identified with the
set of vectors that they contain. For a rank~$3$ simple matroid, there is only
one rank~$0$ and one rank~$3$ flat, and the rank~$1$ flats correspond to the
elements of the matroid, so our primary interest will be in rank~$2$ flats.
Throughout this paper, \defi{flat} will always refer to a rank~$2$ flat.

We refer the reader to~\cite{oxley} for a thorough reference on matroid theory,
or~\cite{katz} for an introduction aimed at algebraic geometers. However, in the
case of interest for this paper, we can give the following axiomatization:
\begin{defn}
A \emph{rank 3 simple matroid}~$M$ consists of a finite set~$E$ of
\emph{elements} and a collection~$F$ of subsets of~$E$, called the \emph{flats}
of~$M$, such that any pair of elements is contained in exactly one flat, and
such that there are at least two flats.
A \defi{basis} of such a matroid is a triple of elements which
are not all contained in a single flat.
\end{defn}

By projectivizing the vector configurations above, a configuration of distinct
$k$-points in the projective plane~$\PP^2_k$ determines a matroid. The
elements of this matroid are the points of the configuration and the flats
correspond to lines in~$\PP^2_k$, identified with the points contained in them.
A matroid coming from a point configuration in this way is called
\defi{realizable over~$k$} and in Section~\ref{sec:lifting}, we will use the
fact that matroid realizability can depend on the field.

Given a rank~$3$ simple matroid~$M$ with elements~$E$ and flats~$F$, we let
$\Gamma_M$ be the bipartite graph with vertex set $E \disjoint F$, and an edge
between $e \in E$ and $f \in F$ when $e$ is contained in~$f$. The graph
$\Gamma_M$ is sometimes called the Levi graph of~$M$. We let $D_M$ be the
divisor on the graph~$\Gamma_M$ consisting of the sum of all vertices
corresponding to elements of the ground set~$E$.

\begin{prop}\label{prop:rank}
The divisor~$D_M$ has rank~$2$.
\end{prop}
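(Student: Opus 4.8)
The plan is to prove $\operatorname{rank}(D_M) = 2$ by establishing both inequalities separately. The divisor $D_M$ has degree $|E|$, so the Riemann--Roch bound and the easy direction (exhibiting effective divisors that cannot be cleared) will not by themselves pin down the rank; instead I would argue directly from the Baker--Norine definition of rank, which says $\operatorname{rank}(D_M) \geq r$ iff $D_M - E_1$ is linearly equivalent to an effective divisor for every effective divisor $E_1$ of degree $r$, and $\operatorname{rank}(D_M) < r+1$ iff there is some effective $E_2$ of degree $r+1$ for which $D_M - E_2$ is not equivalent to an effective divisor.

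For the lower bound $\operatorname{rank}(D_M) \geq 2$, I would take an arbitrary effective divisor $E_1$ of degree $2$ on $\Gamma_M$ and show $D_M - E_1$ is equivalent to an effective divisor. The key tool is the chip-firing move at a flat vertex $f \in F$: firing $f$ sends one chip to each element vertex $e \in f$ and removes $\deg(f) = |f|$ chips from $f$; dually, firing an element vertex $e \in E$ moves chips from $e$ onto the flat vertices containing $e$. Since every element lies in at least one flat and (for a simple matroid with at least two flats) enough structure is present, I expect the argument to split into cases according to whether the two chips of $E_1$ sit on element vertices, flat vertices, or one of each, possibly with multiplicity; in each case one performs a short sequence of reverse chip-firing moves at the flats through which the deficient elements ``see'' the rest of the configuration to redistribute the chips of $D_M$ and cover $E_1$. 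The combinatorial input that makes this work is exactly the matroid axiom that any two elements lie on a common flat, which guarantees the needed connectivity.

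For the upper bound $\operatorname{rank}(D_M) \leq 2$, I would exhibit a specific effective divisor of degree $3$ that $D_M$ cannot cover. The natural candidate is $3$ times a single flat vertex $f_0$, or perhaps the sum of three element vertices forming a basis; I would compute a $q$-reduced representative (in the sense of Baker--Norine, with respect to some chosen vertex $q$) of $D_M - E_2$ and check that it has a negative coefficient at $q$, which certifies non-equivalence to an effective divisor. Running Dhar's burning algorithm from a well-chosen vertex should make this computation clean: one shows that after reduction the chips cannot all be made nonnegative because the degree is $|E|-3$ while the ``demand'' imposed by the flat-element incidence structure is too large at the chosen sink.

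The main obstacle I anticipate is the lower bound: one must handle \emph{every} degree-$2$ effective divisor uniformly, including degenerate cases like two chips on the same vertex or two chips on flat vertices that share no element, and verifying that a redistribution exists in full generality requires care with the bipartite structure and the matroid axioms rather than a single slick move. I would organize this by first reducing, via chip-firing at all flat vertices simultaneously (which is a global equivalence sending $D_M$ to a divisor supported on $F$, or vice versa), to a normal form for $D_M$ itself, and then checking the covering condition against that normal form; the upper bound, by contrast, should be a short explicit computation once the right test divisor and sink are identified.
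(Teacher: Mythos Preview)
Your plan for the lower bound is essentially the paper's: a case split on the support of the degree-$2$ divisor, handled by reverse-firing a flat, or an element together with all flats through it, using throughout the axiom that any two elements span a unique flat. That part is fine.

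The upper bound, however, has a real gap: neither of your proposed test divisors works. If $e_1,e_2,e_3$ are element vertices forming a basis, then $D_M-[e_1]-[e_2]-[e_3]$ is \emph{already} effective, since $D_M$ is by definition the sum of all element vertices; so this choice proves nothing. The other candidate $3[f_0]$ fails whenever $\lvert f_0\rvert\ge 3$: reverse-firing $f_0$ moves one chip from each element of~$f_0$ onto~$f_0$, producing $\lvert f_0\rvert\ge 3$ chips there, so $D_M-3[f_0]$ is equivalent to an effective divisor. In particular, for the Fano matroid (every flat has size~$3$) your test divisor is always coverable and cannot witness $\operatorname{rank}(D_M)\le 2$.

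The paper's choice is different and is what makes the argument go through: take a basis $e_1,e_2,e_3$, let $f_{ij}$ be the flat through $e_i$ and~$e_j$, and set $E=[f_{12}]+[f_{13}]+[f_{23}]$, i.e.\ three \emph{distinct} flat vertices rather than one flat with multiplicity or three element vertices. One then reverse-fires $e_1$ together with all flats through~$e_1$ to obtain a divisor that is effective except for $-1$ at~$f_{23}$, and Dhar's burning algorithm from~$f_{23}$ shows this divisor is $f_{23}$-reduced. The point is that the three flats $f_{12},f_{13},f_{23}$ form a ``triangle'' in $\Gamma_M$ with no single element adjacent to all three (precisely because $e_1,e_2,e_3$ are a basis), and this is what obstructs covering. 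Your general instinct to use Dhar's algorithm is right, but you need this test divisor to make it work.
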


\begin{proof}
To prove the proposition, we first need to show that for any degree~$2$
effective divisor~$E$, the difference $D_M - E$ is linearly equivalent to an
effective divisor. We build up a ``toolkit'' of divisors linearly equivalent
to~$D_M$. First, for any flat~$f$, we can reverse fire $f$. This moves a chip
from each element contained in~$f$ to~$f$ itself. Thus, the result is an
effective divisor whose multiplicity at $f$ is the cardinality of~$f$, which is
at least~$2$. Our second chip-firing move is to reverse fire a vertex $e$ as
well as all flats containing $e$. The net effect will be no change at $e$ but
each neighbor~$f$ of~$e$ will end with $\lvert f\rvert - 1 \geq 1$ chips. Third, we
will use the second chip-firing move, after which all the flats which
contain~$e$ have at least one chip, after which it is possible to reverse
fire~$e$ again.

Now let $E$ be any effective degree~2 divisor on~$\Gamma_M$. Thus, $E$ is the
sum of two vertices of~$\Gamma_M$. We consider the various combinations which
are possible for these vertices. First, if $E = [e] + [e']$ for distinct
elements $e$ and~$e'$, then $\Gamma_M - E$ is effective. Second, if $E =
[e] + [f]$, then we have two subcases. If $e$ is in $f$, then we reverse
fire $e$ and all flats containing it. If $e$ is not in~$f$, then we can
reverse fire just $f$. Third, if $E = [f] + [f']$ for distinct flats $f$
and~$f'$, then there are again two subcases. If $f$ and $f'$ have no
elements in common, then we can reverse fire $f$ and~$f'$. If $f$ and~$f'$
have a common element, say $e$, then we reverse fire $e$ together with the
flats which contain it. Fourth, if $E = 2[e]$, then we use the third
chip-firing move, which will move one chip onto~$e$ for each flat
containing~$e$, of which there are at least~$2$.
Fifth, if $E = 2[f]$, then we reverse fire $f$.

Finally, to show that the rank is at most~$2$, we give an effective degree~3
divisor~$E$ such that $D_M - E$ is not linearly equivalent to any effective
divisor. For this, let $e_1$, $e_2$, and~$e_3$ form a basis for~$M$ and let
$f_{ij}$ be the unique flat containing $e_i$ and~$e_j$ for $1 \leq i < j \leq
3$. We set $E = [f_{12}] + [f_{13}] + [f_{23}]$ and claim that $D_M - E$ is not
linearly equivalent to any effective divisor. We reverse fire $e_1$ together
with all flats containing it to get the following divisor linearly
equivalent to $D_M - E$:
\begin{equation}\label{eq:reduced-divisor}
[e_1] + \big(\lvert f_{12}\rvert - 2\big)[f_{12}] +
\big(\lvert f_{13} \rvert - 2\big)[f_{13}] -
[f_{23}]
+ \sum_{\genfrac{}{}{0pt}{1}{f_k \ni e_1}{f_k \neq f_{12},f_{13}}}
\big(\lvert f_{k} \rvert - 1\big) [f_k],
\end{equation}
which is effective except at $f_{23}$.

We wish to show the divisor in~(1) is not linearly equivalent to any effective
divisor, which we will do by showing that it is $f_{23}$-reduced using Dhar's
burning algorithm~\cite{dhar}. We first claim that for any element $e$ other
than~$e_1$, there is a path from $f_{23}$ to $e$ which does not encounter any
chips. If $e$ is in $f_{23}$, then there is a direct edge between these
vertices. If not, then we first let $f$ denote the unique flat containing both
$e$ and $e_1$. Since $e_1$, $e_2$, and $e_3$ form a basis, they cannot all be
contained in~$f$. Without loss of generality, we can assume that $e_2$ is not
in~$f$, and so $e$, $e_1$, and $e_2$ form a basis. Thus, if $f'$ is the unique
flat containing $e_2$ and $e$, then $f'$ does not contain $e_1$. Therefore, the
path from $f_{23}$ to~$e_2$ to $f'$ to $e$ does not cross any chips.

\begin{figure}
\includegraphics{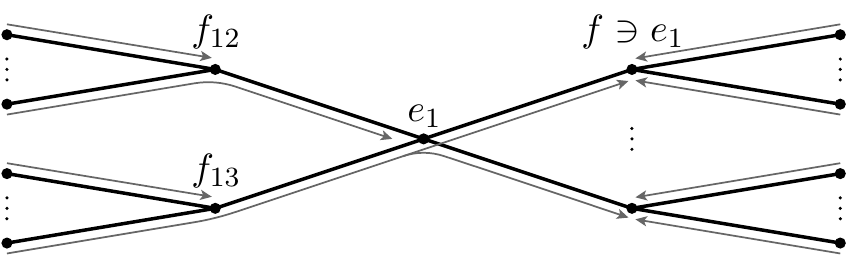}
\caption{Paths through the graph $\Gamma_M$ taken by the burning algorithm
applied to show the divisor~(\ref{eq:reduced-divisor}) is $f_{23}$-reduced.
There is a path from $f_{23}$ to any element other than $e_1$, which are
depicted on the left and right, and from these vertices, there are paths which
eventually equal or exceed the number of chips at all vertices of the
graph.}\label{fig:burning}
\end{figure}
By the claim in the previous paragraph, the burning algorithm will lead to
$\lvert f_{12} \rvert -1$ independent ``fires'' arrviving at $f_{12}$, one for
each element in $f_{12} \setminus e_1$. Thus, these remove the $\lvert f_{12}
\rvert -2$ chips on $f_{12}$ and one path continues on to remove the single chip
from $e_1$. Likewise, $\lvert f_{13} \rvert - 1$ independent ``fires'' arive at
$\lvert f_{13} \rvert - 2$, and one continues on and passes through $e_1$ to
arrive at all the flats containing $e_1$. Therefore, for every flat~$f$ that
contains $e_1$, other than $f_{12}$ and $f_{13}$, which have already been
handled, there is a ``fire'' arriving from every element of~$f$, which exceeds
the $\lvert f \rvert - 1$ chips on this vertex. The paths used to cover
$\Gamma_M$ are summarized schematically in
Figure~\ref{fig:burning}. Since the burning algorithm
covers the graph $\Gamma_M$, we conclude that the divisor $D_M - E$ is
$f_{23}$-reduced and so not linearly equivalent to an effective divisor.
\end{proof}

Proposition~\ref{prop:rank} also shows that if $\Gamma_M$ is made into a
weighted graph by giving all vertices genus~$0$, then $D_M$ has rank~$2$ on the
weighted graph. The rank is, again, unchanged for any lifting of the weighted
graph to a metrized complex. To show that $D_M$ is also a limit $g^2_d$ as in
Proposition~\ref{prop:limit-grd},
we also need to choose $3$-dimensional vector spaces of rational functions on
the variety attached to each vertex.

\begin{proof}[Proof of Proposition~\ref{prop:limit-grd}]
We recall from~\cite{amini-baker} that a lift of~$\Gamma_M$ to a metrized
complex means associating a $\PP^1_k$ for each vertex~$v$ of the graph, which we
denote $C_v$, and a point on~$C_v$ for each edge incident to~$v$. A lift of the
divisor~$D_M$ is a choice of a point on $C_e$ for each element~$e$ of~$M$.

The data of a limit $g^2_d$ is a $3$-dimensional vector space~$H_v$ of rational
functions on each~$C_v$~\cite[Sec.~5]{amini-baker}, which we choose as follows. For each flat~$f$, we
arbitrarily choose two elements from it and let $p_{f,1}$
and~$p_{f,2}$ be the points on~$C_f$ corresponding to the edges from $f$ to each of the
chosen elements. Our vector
space~$H_{f}$
consists of the rational functions which have at worst simple poles at
$p_{f,1}$ and~$p_{f,2}$. For each element~$e$, we choose an arbitrary flat
containing~$e$ and let $q_{e}$ be the point on~$C_e$ corresponding to the edge
to~$e$. Our
vector space~$H_e$ consists of the rational functions which have at worst
poles at~$q_e$ and at the point of the lift of~$D_M$.

Now to check that these vector spaces form a limit~$g^2_d$, we need to show that
the refined rank is~$2$. For this, we use the same ``toolkit'' functions as in
the proof of Proposition~\ref{prop:rank}, but we augment them with rational
functions from the prescribed vector spaces on the algebraic curves. The first
item from our toolkit was reverse firing a flat~$f$ to produce at least two
points on~$C_{f}$. We can use rational functions with poles at $p_{f,1}$ and
$p_{f,2}$ to produce any degree two effective divisor on $C_{f}$.
For each $C_{e}$ such that $e$ is an element of~$f$, we need to use a rational
function with a zero at the edge to~$f$ and a pole at the lift of the
divisor~$D_M$.

The second item we needed in our toolkit was reverse firing an element~$e$
together with all of the flats which contain it. Here, for each element $e'$
other than~$e$, we use the rational function with a pole at the divisor and a
zero at the point corresponding to the edge to the unique flat containing
both~$e'$ and~$e$. At each flat~$f$ containing $e$, we can use any function with
a pole at~$p_{f,i}$, where $i \in \{1, 2\}$ can be chosen to not be the edge
leading to~$e$. This produces a divisor at an arbitrary point of~$C_{e}$.

The third and final operation we used was the previous item followed by a
reverse firing of~$e$. Here, we use the same rational functions as before, but
we can choose any rational function on~$C_e$ which has poles at the point of the
divisor and~$q_e$, thus giving us two arbitrary points on~$C_e$. We conclude
that rational
functions can be found from the prescribed vector spaces to induce a linear
equivalence between the lift of~$D_M$ and any two points on the metrized
complex.
\end{proof}

In the case of rank~$1$ divisors, lifts can be constructed using the theory of
harmonic maps of metrized complexes, which gives a complete theory for divisors
defining tamely
ramified maps to~$\PP^1$~\cite{abbr2}. A sufficient condition for lifting a
rank~$1$ divisor is for it to be the underlying graph of a metrized complex
which has a tame harmonic morphism to a tree (see \cite[Sec.~2]{abbr1}
for precise definitions). These definitions are limited to the rank~$1$ case,
but for rank~$2$ divisors we can subtract points to obtain a divisor of rank at
least~$1$. In particular, if $D_M$ lifts, then for any element~$e$, $D_M - [e]$ will be the
specialization of a rank~$1$ effective divisor. However,
the lifting criterion of~\cite{abbr2} is  satisfied for these subtractions,
independent of the liftability of~$D_M$.

\begin{prop}\label{prop:tame-harmonic}
Let $M$ be any rank~$3$ simple matroid and $e$ any element of~$M$. Also, let $k$
be an algebraically closed field of characteristic not~$2$. Then, $\Gamma_M$
has a tropical modification $\widetilde\Gamma_M$ such that $\widetilde\Gamma_M$
can be lifted to a totally degenerate metrized complex over~$k$ with a tame
harmonic morphism to a genus~$0$ metrized complex, such that one fiber is a lift
of the divisor~$D_M - [e]$.
\end{prop}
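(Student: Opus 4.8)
The plan is to construct an explicit tame harmonic morphism witnessing the lift, rather than to appeal to any general structural result. Write $e$ for the chosen element, let $f_1,\dots,f_m$ be the flats containing~$e$, and put $a_i=\lvert f_i\rvert-1$, so that $\sum_i a_i=\lvert E\rvert-1$. Using the chip--firing moves assembled in the proof of Proposition~\ref{prop:rank}, one checks that $D_M-[e]$ is linearly equivalent to $\sum_i a_i[f_i]$, and that this divisor together with $\sum_{e'\neq e}[e']=D_M-[e]$ spans the complete (one--dimensional) linear system. Combinatorially this pencil is ``projection away from~$e$'': the flats through~$e$ are exactly the fibers of that projection which become special, and everything in sight will have genus~$0$.

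First I would set up the target and the combinatorial morphism. The divisor $\sum_{e'\neq e}[e']$ is one reduced point on each component $C_{e'}$ with $e'\neq e$, so a harmonic morphism having it as a fiber is unramified there; the only ramification is forced at the other end of the pencil, where the contributions $a_i[f_i]$ appear. I would take the target tree~$T$ to be a small caterpillar: a central segment over which all of $\Gamma_M$ lies except for the unfolded component at~$e$, together with, for each~$i$, a branch of~$T$ along which the $a_i$ merging points are brought together two at a time. Correspondingly I would tropically modify $\Gamma_M$ to $\widetilde\Gamma_M$ by attaching segments at~$e$ and at the flats $f_i$ (and, where necessary, subdividing) so that no component is contracted, so that the balancing condition holds at every vertex, and so that the local Riemann--Hurwitz identity $\sum_P(e_P-1)=2d_v-2$ holds at every vertex with every genus equal to~$0$. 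The essential point is that this is purely combinatorial and uses nothing about~$M$ beyond the incidence of its elements and flats; in particular it is insensitive to realizability.

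Next I would lift this datum to a metrized complex over~$k$ via the lifting theorem of~\cite{abbr2}. For each vertex~$v$ of $\widetilde\Gamma_M$ this requires a degree~$d_v$ self-map of $\PP^1_k$ with prescribed behavior at the points corresponding to the incident edges, plus freely placed extra branch points to satisfy Riemann--Hurwitz. Over~$\CC$ such covers exist by Riemann existence; in characteristic~$p$ they exist and are tame provided $p$ divides no ramification index. By arranging all edge slopes to equal~$1$, the only ramification one introduces is the simple branching used to merge the $a_i[f_i]$ pairwise, which has ramification index~$2$; this is exactly why $\operatorname{char} k\neq 2$ is the only restriction. Gluing these covers along~$T$ and invoking~\cite{abbr2} then yields the required totally degenerate metrized complex with a tame harmonic morphism to a genus~$0$ metrized complex, one of whose fibers lifts $\sum_{e'\neq e}[e']=D_M-[e]$.

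I expect the main obstacle to be step two: choosing the tropical modification, the branch structure of~$T$, and the edge slopes so that balancing and local Riemann--Hurwitz are simultaneously satisfied at every vertex with all genera~$0$, while keeping every local cover tame---that is, realizing all the necessary collisions by simple branching, so that only characteristic~$2$ must be excluded. Once such a consistent combinatorial harmonic morphism is produced, the first and third steps are essentially formal, given Proposition~\ref{prop:rank} and the results of~\cite{abbr2}.
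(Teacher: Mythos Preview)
Your overall strategy---build a finite harmonic morphism from a modification of $\Gamma_M$ to a tree with $D_M-[e]$ as one fiber, then lift via \cite{abbr2}---is exactly the paper's strategy. But the combinatorial construction you outline is both more complicated than necessary and left unfinished: you describe a caterpillar target with branches on which the multiplicities $a_i[f_i]$ are ``merged two at a time,'' and you explicitly flag the verification of balancing and local Riemann--Hurwitz at every vertex as the main unresolved obstacle. That step is the entire content of the proposition, so as written this is a plan rather than a proof.

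The paper's construction is much simpler and avoids your difficulty. The target $T$ is a \emph{star}, not a caterpillar: one central vertex $w$, one ray $r_f$ for each flat $f$ not containing~$e$, and one ray $r_e$. Every element $e'\neq e$ maps to $w$ with local degree~$1$ (so the fiber over $w$ is $D_M-[e]$ on the nose); each flat $f\not\ni e$ maps one unit out along $r_f$ with local degree $\lvert f\rvert$; and $e$ together with the flats through it all go out along the single ray $r_e$. Unbounded edges are attached to $\widetilde\Gamma_M$ only to make the edge counts balance. The key point you are missing is that \emph{every} edge gets expansion factor~$1$. This means that when you lift to a metrized complex, the marked points on each $\PP^1$ are all \emph{unramified} for the local cover $\phi_v\colon\PP^1\to\PP^1$; the ramification of $\phi_v$ lives entirely at unmarked points, where it is unconstrained. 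So one merely needs \emph{some} tame degree-$d_v$ self-map of $\PP^1$, and in characteristic $\neq 2$ a generic (simply branched) map of any degree does the job. There is no need to orchestrate pairwise collisions or to control where the ramification lands, and no delicate Riemann--Hurwitz bookkeeping beyond the harmonicity check, which the paper carries out vertex by vertex.
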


\begin{proof}
We first construct a modification~$\widetilde \Gamma_M$ of~$\Gamma_M$ which has
a finite harmonic morphism from $\widetilde \Gamma_M$ to a tree~$T$. The
tree~$T$ will be a star tree with a central vertex~$w$, together with an
unbounded edge, denoted $r_f$, for each flat~$f$ which does not contain~$e$, and
a single unbounded edge~$r_e$ corresponding to~$e$. Our modification
of~$\Gamma_M$ consists of adding the following unbounded edges: At~$e$, we add
one unbounded edge~$s_{e,f}$ for each flat~$f$ containing~$e$. At each
element~$e'$ other than $e$, we add one unbounded edge~$s_{e',f}$ for each
flat~$f$ which contains neither $e$ nor~$e'$. At a flat~$f$, we add unbounded
edges~$s_{f,i}$ where $i$ ranges from $1$ to~$\lvert f\rvert$ if $e \notin f$
and from $1$ to $\lvert f\rvert - 2$ if $e \in f$.

\begin{figure}
\includegraphics{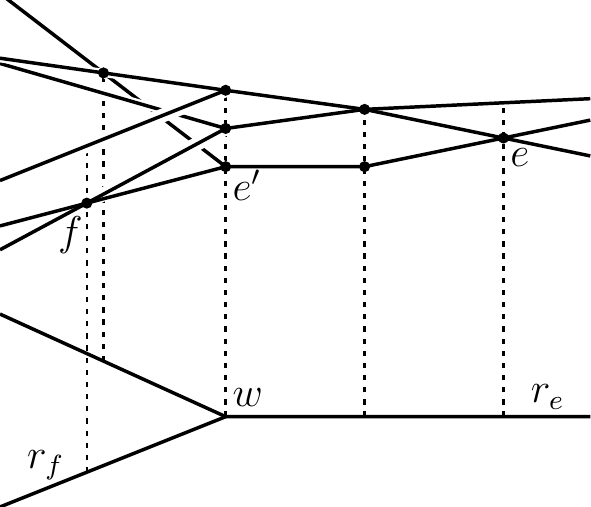}
\caption{Modification of $\Gamma_M$ which has a finite, effective harmonic morphism to the
tree to a tree, such that the
fiber over the central vertex of the tree is $D_M - [e]$. In this figure, $f$ is
a flat which does not contain $e$ and $e'$ is an element of~$f$.}
\label{fig:harmonic}
\end{figure}

We now construct a finite harmonic morphism~$\phi$ from $\widetilde \Gamma_M$
to~$T$. Each element other than $e$ maps to the central vertex~$w$ of~$T$.
Each flat~$f$ not containing~$e$ maps to a point one unit of
distance along the corresponding ray~$r_f$ of~$T$. Then the rays~$s_{e', f}$
and~$s_{f,i}$ also map to the ray $r_f$, starting at~$w$ and $\phi(f)$
respectively.

We map the vertex~$e$ to its unbounded ray~$r_e$, at a distance of~$2$ from~$w$,
which leaves all of the flats containing~$e$ along the same ray at a distance
of~$1$. The rays $s_{e,f}$ and $s_{f,i}$, for flats~$f$ containing~$e$ also
map to~$r_e$, starting distances of~$2$ and~$1$ from~$w$ respectively. The
map~$\phi$ is depicted in Figure~\ref{fig:harmonic}.

To check that $\phi$ is harmonic, we need to verify that locally, around each
vertex~$v$ of $\widetilde \Gamma_M$, the same number of edges map to each of the
edges incident to~$\phi(v)$, and this number is the degree of $\phi$ at
$v$~\cite[Sec.~2]{baker-norine}. First, suppose that the vertex~$v$ corresponds
to an element~$e'$ other than~$e$ and we have defined $\phi(e')$ to be the
central vertex~$w$. In this case, for each ray $r_f$ of~$T$, there is exactly
one edge incident to $e'$ mapping to $r_f$, either the edge between $e'$ and $f$
if $f$ contains $e$, or the unbounded edge $s_{e',f}$ if not. There is also
exactly one edge mapping to $r_e$, which is the edge between $e'$ and the unique
flat containing both $e$ and~$e'$. Therefore, $\phi$ is harmonic at the
vertex~$e'$ with local degree equal to~$1$.

Second, at the vertex~$e$, which maps along the edge $r_e$, there is one edge
mapping to the bounded side of $r_e$ for each flat~$f$ containing $e$ and also
for each such flat, one infinite ray $s_{e,f}$ mapping to the unbounded side
of~$r_e$. Thus $\phi$ is also harmonic at $e$, and has local degree equal to the
number of flats containing~$e$.

Finally, we check that $\phi$ is harmonic at a vertex corresponding to a
flat~$f$. If $f$ does not contain~$e$, then there are $\lvert f \rvert$ rays
mapping to the unbounded side of~$r_f$ and the same number of edges mapping to
the bounded side, connecting $f$ to the elements it contains. Thus, at such a
vertex, $\phi$ is harmonic and its local degree is $\lvert f \rvert$. If $f$
does contain~$e$, then there are $\lvert f \rvert - 2$ rays mapping to the
unbounded side of~$r_e$ together with the edge connecting $f$ to~$e$. On the
bounded side of $r_e$, there are also $\lvert f \rvert - 1$ edges, connecting
$f$ to the elements $f \setminus \{e\}$, and so here $\phi$ is harmonic with
local degree $\lvert f \rvert - 1$.

To lift $\phi$ to a harmonic morphism of totally degenerate metrized complexes,
we need to choose a map $\phi_v \colon \PP^1 \rightarrow \PP^1$ for each
vertex~$v$ of $\widetilde \Gamma_M$ and an identification of the outgoing
directions with points on $\PP^1$. Having assumed characteristic not~$2$, we can
choose a tame homomorphism of degree equal to the degree of~$\phi$ at~$v$
as~$\phi_v$. We identify the edges incident to~$v$ with points of $\PP^1$ at
which $\phi_v$ is unramified, since these edges all have expansion factor equal
to~$1$. Then, the preimage of a $k$-point of the curve at $w$ consists of one
point in each $\PP^1$ corresponding to the elements $e'$ not equal to $e$,
because the local degrees at these vertices are equal to~$1$. Thus, this
preimage is a lift of $D_M \setminus [e]$ and we have our desired morphism of
metrized complexes.
\end{proof}

\section{Lifting matroid divisors}\label{sec:lifting}

In this section, we characterize the existence of lifts of matroid divisors in
terms of realizability of the corresponding matroids. Recall from~\cite{baker},
that if $R$ is a discrete valuation ring with algebraically closed residue
field, then a regular semistable family~$\X$ of curves over~$R$ gives
homomorphism~$\rho$ from the group of divisors on the general fiber to the group
of divisors on the dual graph~$\Gamma$ of the special fiber. This dual graph is
defined to have a vertex~$v$ for each irreducible component of the special fiber
and an edge for each point of intersection between two components. Then, for any
divisor $\widetilde D$ on the general fiber of~$\X$, $\rho(\widetilde D)$ is
defined to be the formal sum of the vertices of~$\Gamma$ with the coefficient of
a vertex~$v$ equal to the degree of the intersection of~$\overline D$ with
$C_v$, where $\overline D$ is the closure of $\widetilde D$ in $\X$ and $C_v$ is
the irreducible component corresponding to~$v$~\cite[Sec.~2A]{baker}. With this
definition, we have an inequality $r(\widetilde D) \leq r(\rho(\widetilde D))$
between the algebraic and graph-theoretic ranks~\cite[Lem.~2.8]{baker}.

We now consider the semistable family~$\X$ over a discrete valuation ring~$R$,
where we drop the assumption that the residue field of~$R$ is algebraically
closed. In this case, we apply the definitions from the previous paragraph by
first base changing to a discretely valued extension $R' \supset R$, such that
the residue field of~$R'$ is algebraically closed and such that a uniformizer
of~$R$ is also a uniformizer of~$R'$. In particular, the dual graph has one
vertex for each geometric irreducible component of the special fiber and it is
independent of the choice of~$R'$. Also, the definition of~$\rho(\widetilde D)$
is independent of~$R'$ because it can be computed by taking the closure of
$\widetilde D$ in $\X$ and recording the degree of the pullback of this Cartier
divisor to each of the geometric irreducible components of the special fiber.
Moreover, for any family~$\X$, there is a finite \'etale extension $R'$ of~$R$
such that, after base changing to~$R'$, the irreducible components of the
special fiber are geometrically irreducible. Therefore, after this base change,
the computation of the dual graph and the specialization map can be carried out
directly on the resulting family over~$R'$. Since the dimension of a linear
system is invariant under base change, we also have a specialization inequality
$r(\widetilde D) \leq r(\rho(\widetilde D))$ on~$\X$.

We will say that a \defi{lifting} over~$R$ of an effective divisor~$D$ of
rank~$r$ on a graph~$\Gamma$ is a regular semistable family~$\X$ over~$R$ with
dual graph~$\Gamma$,
together with an effective divisor~$\widetilde D$ on the general fiber of~$\X$
such that $\rho(\widetilde D) = D$ and $\widetilde D$ has rank~$r$. Here, and
throughout this section, a regular semistable family~$\X$ includes the
hypothesis that $\X$ is semistable after passing to an extension with
algebraically closed residue field. The relationship between liftings of a
matroid divisor~$D_M$ and its matroid depends on the following, slightly weaker
variant of realizability for~$M$:

\begin{defn}
Let $k$ be a field. We say that a matroid~$M$ has a \defi{Galois-invariant
realization over an extension of~$k$} if there exists a finite scheme
in $\PP^2_k$ which becomes a union of distinct points over $\overline
k$, and these points realize~$M$.
\end{defn}

Equivalently, a Galois-invariant realization is a realization over a finite
Galois extension~$k'$ of~$k$ such that the Galois group $\Gal(k'/k)$ permutes
the points of the realization. Thus, the distinction between a realization and a
Galois-invariant realization is only relevant for matroids which have
non-trivial symmetries. Moreover, Lemma~\ref{lem:break-symmetry} will show that
any matroid can be extended to one where these symmetries can be broken, without
affecting realizability over infinite fields.

\begin{ex}
Let $M$ be the matroid determined by all $21$ points of $\PP^2_{\mathbb F_4}$.
Then $M$ is not realizable over $\PP^2_{\mathbb F_2}$ because it contains more
than $7$ elements, and there are only $7$ points in $\PP_{\mathbb F_2}^2$.
However, $M$ is clearly realizable over~$\mathbb F_4$ and the Galois group
$\Gal(\FF_4/\FF_2) \isom \ZZ/2$ acts on these points by swapping pairs. Thus,
$M$ has a Galois-invariant realization over an extension of~$\mathbb F_2$.
\end{ex}

\begin{ex}
Let $M$ be the Hesse matroid of 9~elements and 12 flats. Then $M$ is not
realizable over~$\mathbb R$ by the Sylvester-Gallai theorem. However, the
flex points of any elliptic curve are a realization of~$M$ over~$\mathbb C$.
If the
elliptic curve is defined over~$\mathbb R$, then the set of all flexes
points is also defined over~$\mathbb R$, so $M$ has a Galois-invariant
realization over an extension of~$\mathbb R$.
\end{ex}

\begin{thm}\label{thm:matroid-lifting}
Let $\Gamma_M$ and $D_M$ be the graph and divisor obtained from a rank~$3$
simple matroid~$M$ as in Section~\ref{sec:divisors}. Also, let $R$ be
any discrete valuation ring with residue field~$k$. If $D_M$ lifts
over~$R$, then the matroid~$M$ has a Galois-invariant
realization over an extension of~$k$.
\end{thm}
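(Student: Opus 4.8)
The plan is to take a lifting $\X$ of $D_M$ over $R$, together with the effective divisor $\widetilde D$ of rank~$2$ on the general fiber with $\rho(\widetilde D) = D_M$, and extract from it a point configuration in $\PP^2$ realizing $M$. After base changing along a finite \'etale extension $R' \supset R$ as in the discussion preceding the theorem, I may assume the components of the special fiber are geometrically irreducible; a Galois-invariant realization over an extension of $k$ will come from descending the resulting realization over the (possibly enlarged) residue field back down to $k$, using that the extension $R'/R$ is \'etale so $\Gal$ of the residue extension acts. So the heart of the matter is: over an algebraically closed residue field, produce from $\widetilde D$ a realization of $M$.

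First I would use the rank~$2$ hypothesis to produce a map to $\PP^2$. Since $r(\widetilde D) = 2$, the complete linear system $|\widetilde D|$ on the general fiber $X_\eta$ defines a morphism $\varphi\colon X_\eta \to \PP^2_K$ (where $K = \operatorname{Frac} R$), and I can spread this out: the sections of $\cO(\widetilde D)$ that are regular on the closure $\overline D$ in $\X$ give a rational map $\X \dashrightarrow \PP^2_R$, which after a sequence of blow-ups (staying within regular semistable models, or at worst passing to a further base change to keep things semistable) becomes a morphism $\Phi\colon \X \to \PP^2_R$. The key point is to understand the restriction of $\Phi$ to each component $C_v$ of the special fiber. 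Because $\rho(\widetilde D) = D_M$ assigns multiplicity~$1$ to each element-vertex $e$ and multiplicity~$0$ to each flat-vertex $f$, and because $D_M$ has the "toolkit" structure from Proposition~\ref{prop:rank}, the degree of $\Phi^*\cO_{\PP^2}(1)$ restricted to $C_e$ is $1$ and restricted to $C_f$ is $0$. Hence each flat-component $C_f$ is contracted to a point $P_f \in \PP^2_k$, while each element-component $C_e \isom \PP^1$ maps isomorphically onto a line $L_e \subset \PP^2_k$.

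The combinatorial input then forces the incidence structure. An edge of $\Gamma_M$ between $e$ and $f$ (i.e.\ $e \in f$) corresponds to a node of the special fiber where $C_e$ meets $C_f$; applying $\Phi$, the point $P_f$ must lie on the line $L_e$. Conversely, if $e \notin f$, then $C_e$ and $C_f$ are disjoint in the special fiber, so $P_f \notin L_e$. Dualizing $\PP^2_k$, the lines $L_e$ become points $P_e^\vee$, the points $P_f$ become lines $\ell_f^\vee$, and "$P_f \in L_e \iff e \in f$" becomes "$P_e^\vee \in \ell_f^\vee \iff e \in f$". The points $\{P_e^\vee\}_{e \in E}$ must be distinct (if $P_e^\vee = P_{e'}^\vee$ then $L_e = L_{e'}$, but then $C_e$ and $C_{e'}$ would have to meet the same flat-components, and since two elements of a rank~$3$ simple matroid lie on a unique common flat, one checks this is impossible unless $e = e'$, using that there are at least two flats), and the condition that exactly the collinear triples of $P_e^\vee$'s are the non-bases of $M$ follows: three elements $e,e',e''$ lie on a common flat $f$ iff all three lines $L_e, L_{e'}, L_{e''}$ pass through $P_f$ iff $P_e^\vee, P_{e'}^\vee, P_{e''}^\vee$ are collinear (on $\ell_f^\vee$). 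Thus $\{P_e^\vee\}$ realizes $M$ over $k$ (or its algebraic closure), and tracing through the \'etale base change gives the Galois-invariant realization over an extension of the original residue field.

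**Main obstacle.** The delicate step is making $\Phi$ an honest morphism with controlled behavior on the special fiber: resolving the base locus of $|\widetilde D|$ on the model $\X$ may require blow-ups and possibly a ramified base change, and one must check that the degree computation on each $C_v$ survives this, i.e.\ that no component is created or modified in a way that changes which lines and points appear, and that the resulting configuration still has the exact incidence pattern of $M$ rather than some degeneration of it. In particular one has to rule out that two of the lines $L_e$ coincide or that some $L_e$ degenerates, and confirm that the contracted images $P_f$ are genuinely distinct points realizing the flats --- this is where the precise structure of $D_M$ and the semistability of $\X$ are used. I expect this spreading-out and degree-bookkeeping argument, rather than the final dualization, to be where the real work lies.
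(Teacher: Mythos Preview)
Your overall strategy coincides with the paper's: restrict the rank-$2$ linear system to the special fiber, observe that flat-components contract to points while element-components map to lines, and read off a dual realization of~$M$. But the ``main obstacle'' you flag --- resolving the base locus by blow-ups and possibly ramified base change --- is both unnecessary and dangerous: blow-ups change the dual graph and the degrees on components, and a ramified base change destroys regularity of~$\X$, so the bookkeeping you worry about would genuinely go wrong. The paper sidesteps all of this with a one-line observation you missed: because $\Gamma_M$ is bipartite and $D_M$ is supported only on the element side, the closure $\overline D$ cannot meet any node of~$\X_0$. Hence the base locus of the restricted linear series on~$\X_0$ consists of smooth points, and one simply subtracts them to obtain a morphism $\phi\colon \X_0 \to \PP^2_k$ without modifying~$\X$ at all.

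That simplification, however, exposes a genuine gap in your argument: after subtracting base points, the degree of $\phi$ on a component $C_e$ is either~$1$ or~$0$, not automatically~$1$ as you assert. The paper spends a paragraph ruling out degree~$0$: if some $C_{e_1}$ were contracted to a point, then since every other element shares a flat with~$e_1$, every line $\phi(C_{e'})$ would pass through that point; completing $e_1$ to a basis one then forces all element-components onto a single line, contradicting non-degeneracy of~$\phi$. Finally, your inference ``$C_e$ and $C_f$ disjoint $\Rightarrow P_f \notin L_e$'' is not valid --- disjoint curves can have intersecting images. The paper does not claim this; it instead shows only that distinct flats yield distinct points, using that each $C_e$ maps \emph{isomorphically} onto its line so that distinct nodes on $C_e$ (coming from distinct flats through~$e$) have distinct images.
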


By projective duality, a point in~$\PP^2$ is equivalent to a line in the
dual projective space~$\PP^2$. Thus, the collection of points realizing a
matroid is equivalent to a collection of lines, in which the flats correspond to
the points of common intersection. It is this dual representation that we will
construct from the lifting.

\begin{proof}[Proof of Theorem~\ref{thm:matroid-lifting}.]
Let $\X$ be the semistable family over~$R$ and $\widetilde D$ a rank~2 divisor
on the general fiber of~$\X$ with $\rho(\widetilde D) = D_M$. First, we make the
simplifying assumption that the components of the special fiber are
geometrically irreducible, so that we can compute specializations in~$\X$,
without needing to take further field extensions. Let $\overline D$ denote the
closure in~$\X$ of~$\widetilde D$. By assumption, $H^0(\X, \cO(\overline D))$ is
isomorphic to the free $R$-module~$R^3$. By restricting a basis of these
sections to the special fiber~$\X_0$, we have a rank~$2$ linear series on the
reducible curve~$\X_0$.

If $\overline D$ intersected a node of $\X_0$, then it would intersect both
components of~$\X_0$ containing that node, so $\rho(D)$ would have positive
multiplicity on two adjacent vertices. However, $\Gamma_M$ is bipartite and the
divisor $D_M$ is supported on one of these parts, corresponding to the elements
of the matroid, so $\overline D$ cannot intersect any of the nodes
of~$\X_0$. Thus, the base locus of our linear series consists of a finite number
of smooth points of~$\X_0$. Since the base locus consists of smooth points, we
can subtract the base points to get a regular, non-degenerate morphism
$\phi\colon \X_0 \rightarrow \PP_k^2$.

By the assumption that $\widetilde D$ specializes to $D_M$, we have an upper
bound on the degree of $\phi$ restricted to each component of~$\X_0$.
For a flat~$f$ of~$M$, the corresponding component~$C_f$ has degree~$0$
under~$\phi$, so $\phi(C_f)$ consists of a single point.
For an element~$e$, the corresponding component $C_e$ has either
degree~1 or~0 depending on whether the intersection of~$\overline
D$ with~$C_e$ is contained in the base locus. If the intersection is in the base locus, then $C_e$ again maps to
a point, and if not, $C_e$ maps isomorphically to a line in~$\PP_k^2$.
Thus, the image $\phi(\X_0)$ is a union of lines in~$\PP_k^2$, which we will
show to be a dual realization of the matroid~$M$.
Let $f$ be a flat of~$M$. Since the
component of $\X_0$ corresponding to~$f$ maps to a point, the images of the
components corresponding to the elements in~$f$ all have a common point of
intersection.

Now let $e_1$ be an element of~$M$ and suppose that the component $C_{e_1}$ maps
to a point $\phi(C_{e_1})$. Since every other element $e'$ is in a flat with
$e_1$, that means that $\phi(C_{e'})$, the image of the corresponding component
must contain the point~$\phi(C_{e_1})$. Since $\phi$ is non-degenerate, there
must be at least one component $C_{e_2}$ which maps to a line. Let $e_3$ be an
element of~$M$ which completes $\{e_1, e_2\}$ to a basis. Thus, the flat
containing $e_2$ and $e_1$ is distinct from the flat containing $e_2$ and~$e_3$.
Since $\phi$ maps $C_{e_2}$ isomorphically onto its image, this means that
$\phi(C_{e_3})$ must meet $\phi(C_{e_2})$ at a point distinct from the point
$\phi(C_{e_1})$. Thus, $\phi(C_{e_3})$ must be equal to $\phi(C_{e_2})$. Any
other element~$e''$ in~$M$ forms a basis with $e_1$ and either $e_2$ or~$e_3$
(or both). In either case, the same argument again shows that $C_{e''}$ must map
to the same line as $C_{e_2}$ and~$C_{e_3}$. Thus, this line would be the entire
image of $\phi$, which again contradicts the assume non-degeneracy. 
Therefore, we conclude that $\phi$ maps each
component $C_e$ corresponding to an element~$e$ isomorphically onto a line
in~$\PP_k^2$. We've already shown that for any set of elements in a flat, the
corresponding lines intersect at the same point. Moreover, because each
component~$C_e$ maps isomorphically onto its image, distinct flats must
correspond to distinct points in~$\PP_k^2$. Thus, $\phi(\X_0)$ is a dual
realization of the matroid~$M$.

If the components of the special fiber are not geometrically irreducible, then
we can find a finite \'etale extension~$R'$ of~$R$ over which they are.
In our construction of a realization of~$M$ over the residue field of~$R'$, we
can assume that we have chosen a basis of $H^0(\X \times_R R', \cO)$ that is
defined over~$R$. Then, the matroid realization will be the base extension of a
map of $k$-schemes $\X_0 \rightarrow \PP^2_k$. We let $k'$ be the Galois closure
of the residue field of $R'$. Then $\Gal(k'/k)$ acts on the realization of~$M$
over $k'$, but the total collection of lines is defined over~$k$, and thus
invariant. Thus, $M$ has a Galois-invariant realization over an extension of~$k$
as desired.
\end{proof}

For the converse of Theorem~\ref{thm:matroid-lifting}, we need to consider
realizations of matroids over discrete valuations ring~$R$, by which we mean
$R$-points in $\PP^2$ whose images in both the residue field and the fraction
field realize~$M$. For example, if $R$ contains a field over which $M$ is
realizable, then $M$ is realizable over~$R$. We say that $M$ has a
\defi{Galois-invariant realization over an extension of~$R$} if there exists a
finite, flat scheme in $\PP^2_R$ whose special and general fiber are
Galois-invariant realizations of~$M$ over extensions of the residue field and
fraction field of~$R$, respectively.

In the following theorem,
a \defi{complete flag} refers to the pair of an
element~$e$ and a flat~$f$ such that $e$ is contained in~$f$.

\begin{thm}\label{thm:construction}
Let $R$ be a discrete valuation ring with residue field~$k$. Let $M$ be a
simple rank~3 matroid
with a Galois-invariant realization over an extension of~$R$.
Assume that $\lvert k \rvert > m - 2n + 1$, where $n$ is the number of elements
of~$M$ and $m$ is the number of complete flags.
Then $\Gamma_M$ and $D_M$ lift over~$R$.
\end{thm}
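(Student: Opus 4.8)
\emph{Approach.} I will reverse the construction in the proof of Theorem~\ref{thm:matroid-lifting}. Dualizing the given Galois-invariant realization of $M$ over an extension of $R$ produces a flat family of line arrangements over $\Spec R$: a divisor $\mathcal A \subset \PP^2_R$, defined over $R$, whose geometric fibers are unions of $n$ distinct lines $L_e$ indexed by the elements of $M$, meeting in a point $P_f$ for each flat $f$; the set $\{P_f\}$ is a finite flat subscheme of $\PP^2_R$, also defined over $R$. Since both the special and the generic fiber realize $M$, the incidences $P_f \in L_e \iff e \in f$ hold exactly. The curve $\X$ I build will have for its special fiber the nodal curve obtained from this arrangement by separating the $n$ lines and inserting one $\PP^1$ at each point $P_f$ transverse to the branches through it; this curve has only rational components and dual graph exactly the Levi graph $\Gamma_M$, and the insertion is realized by a blow-up.

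\emph{The family.} Let $Y \to \PP^2_R$ be the blow-up along $\{P_f\}$, with exceptional divisors $E_f$, and let $\widetilde{\mathcal A} = \bigl(\bigcup_e \widetilde L_e\bigr) \cup \bigl(\bigcup_f E_f\bigr) \subset Y$, the union of the strict transforms of the $L_e$ with all the exceptional curves. Then $\widetilde{\mathcal A}$ is flat over $R$, its special fiber is the nodal curve just described, and it lies in the linear system of $\mathcal M = \cO_Y\bigl(n\,\pi^*\ell - \sum_f (\lvert f\rvert - 1)E_f\bigr)$, where $\ell$ is the hyperplane class. Because $R$ is a discrete valuation ring, the members of $\lvert\mathcal M\rvert$ over $R$ with special fiber $\widetilde{\mathcal A}_0$ are exactly those whose defining section is obtained from one for $\widetilde{\mathcal A}$ by adding $t$ times a section $g \in H^0(Y,\mathcal M)$. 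I choose $g$ so that its reduction modulo $t$ does not vanish at any node of $\widetilde{\mathcal A}_0$; then the resulting total space $\X$ is regular along those nodes and smooth over $R$ elsewhere on the special fiber, so $\X$ is a regular proper flat $R$-scheme whose special fiber is nodal with dual graph $\Gamma_M$, and its generic fiber $\X_\eta$ is a smooth curve of genus $b_1(\Gamma_M)$, mapping under $Y_\eta \to \PP^2_K$ birationally onto a plane curve of degree $n$ with an ordinary point of multiplicity $\lvert f\rvert - 1$ at each $P_f$.

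\emph{The divisor, rank, and descent.} For $\widetilde D$ I take the pullback to $\X_\eta$ of a general line $H$ in $\PP^2$. Choosing $H$ to meet the special-fiber arrangement transversally at $n$ points, none of them a $P_f$, the closure of $\widetilde D$ meets each component $C_e$ in one point and each $C_f$ in none, so $\rho(\widetilde D) = \sum_e [C_e] = D_M$. Since $\X_\eta \to \PP^2_K$ is birational onto a nondegenerate curve, the coordinate functions pull back to independent sections and $r(\widetilde D) \ge 2$; with the specialization inequality $r(\widetilde D) \le r(\rho(\widetilde D))$ and Proposition~\ref{prop:rank}, this forces $r(\widetilde D) = 2$, so $(\X, \widetilde D)$ is a lifting of $(\Gamma_M, D_M)$ over $R$. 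No descent step is needed: the arrangement $\mathcal A$, the subscheme $\{P_f\}$, the blow-up $Y$, the section $g$, and the line $H$ were all chosen over $R$, and $\Gal$ merely permutes the corresponding components of $\X_0$ and the base points of $\widetilde D$.

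\emph{Main obstacle and the numerical hypothesis.} The substance of the argument is to make the choices of $g \bmod t$ and of $H$ over the residue field $k$, not just over $\overline k$; this is where $\lvert k\rvert > m - 2n + 1$ is used. The reduction $g_0 \in H^0(Y_0, \mathcal M_0)$ must avoid, for each flat $f$, the $\lvert f\rvert$ nodes of $\widetilde{\mathcal A}_0$ lying on $E_f \cong \PP^1$ --- a nonvanishing condition for a section of degree $\lvert f\rvert - 1$ on that line --- and $H$ must avoid the points $P_f$; over a finite field these general-position conditions can fail when $\lvert k\rvert$ is small. The hard part will be the bookkeeping: organizing the $\sum_f \lvert f\rvert = m$ node-avoidance conditions flat by flat, using the degree of $\mathcal M_0$ on each exceptional $\PP^1$ and the freedom in positioning the $n$ lines, one must exploit the dependencies among these conditions so that a suitable $g_0$ (and $H$) exists as soon as $\lvert k\rvert > m - 2n + 1$, rather than only under the naive bound $\lvert k\rvert > m$.
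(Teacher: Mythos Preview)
Your construction is exactly the paper's: blow up $\PP^2_R$ along the singular locus of the dual line arrangement, let $A$ be the total transform (strict transforms of the lines together with the exceptional curves), deform the defining section by $h + \pi\widetilde g$ with $g$ chosen not to vanish at the nodes of $A_0$, and take $\widetilde D$ to be the pullback of a line missing the blown-up points. Your treatment of regularity, of $r(\widetilde D)=2$, and of the Galois-invariant case all match the paper.

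The genuine gap is your final paragraph: you correctly isolate the crux---choosing $g_0$ and $H$ over $k$ rather than $\overline k$---but you do not carry it out, and your proposed ``flat by flat'' bookkeeping is not the paper's method and it is unclear it would produce the bound $m-2n+1$. The paper argues globally on the surface. First, a Riemann--Roch computation on the blow-up $S$ (Lemma~\ref{lem:euler}) gives $\chi(\cO(A)) = 2n+1$ and $H^2=0$, hence $\dim_k H^0(S,\cO(A))\otimes k \ge 2n+1$. Second, one observes that the $m$ node-vanishing conditions are hyperplanes in this space whose common intersection is exactly the line spanned by the defining section of $A$: a section of $\cO(A)$ vanishing at every node of $A$ must, by degree reasons on each component, vanish on all of $A$. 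Thus the intersection of the $m$ hyperplanes has codimension $c \ge 2n$. Third, a clean hyperplane-avoidance lemma (Lemma~\ref{lem:avoidance}: given $m$ hyperplanes in $k^N$ whose intersection has codimension $c$, a point off all of them exists once $\lvert k\rvert > m - c + 1$) yields the desired $g_0$ as soon as $\lvert k\rvert > m - 2n + 1$. For the line $H$, one needs only $\lvert k\rvert > \ell - 2$ with $\ell$ the number of flats; the paper checks $m - 2n + 1 \ge \ell - 2$ via Riemann--Roch for graphs applied to $D_M$. This global Riemann--Roch/avoidance argument is the missing idea; your component-by-component plan would have to reconstruct the inequality $\dim H^0 \ge 2n+1$ by other means, and you have not indicated how.
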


Note that Theorem~\ref{thm:construction} does not make any completeness or other
assumptions on the DVR beyond the cardinality of the residue field. In contrast,
ignoring $D_M$ and its rank, a semistable model $\mathcal X$ is only known to
exist for an arbitrary graph when the valuation ring is
complete~\cite[Thm.~B.2]{baker}.

We construct the semistable family in Theorem~\ref{thm:construction} using a
blow-up of projective space. We begin with a computation of the Euler
characteristic for this blow-up.

\begin{lem}\label{lem:euler}
Let $S$ be the blow-up of $\PP^2_K$ at the points of intersection of an
arrangement of $n$ lines. If $A$ is the union of the strict transforms of the
lines and the exceptional divisors, then the dimension of $H^0(S, \cO(A))$ is at
least $2n+1$.
\end{lem}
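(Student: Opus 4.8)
The plan is to compute $H^0(S,\cO(A))$ by means of the exact sequence relating $S$, the divisor $A$, and a convenient subcurve, and to estimate from below. First I would set up notation: let $L_1,\dots,L_n$ be the lines, let $S\to\PP^2_K$ be the blow-up at all intersection points $p_1,\dots,p_s$ of the arrangement, let $E_1,\dots,E_s$ be the exceptional divisors and $\widetilde L_i$ the strict transforms, so that $A=\sum_i\widetilde L_i+\sum_j E_j$. The key observation is that the pullback of a line $H$ on $\PP^2$ satisfies $\pi^*H=\widetilde L_i+\sum_{p_j\in L_i}E_j$ for each $i$, so that $A$ differs from $\pi^*(nH)$ by a non-negative combination of exceptional divisors: writing $m_j$ for the multiplicity of the point $p_j$ in the arrangement (the number of lines through it), we get $A=\pi^*(nH)-\sum_j(m_j-1)E_j$. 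Hence global sections of $\cO(A)$ are exactly the degree-$n$ forms on $\PP^2$ vanishing to order $\ge m_j-1$ at each $p_j$.

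Next I would produce enough such forms explicitly. The product $\ell_1\cdots\ell_n$ of the linear forms cutting out the $L_i$ vanishes to order exactly $m_j$ at $p_j$, hence certainly to order $\ge m_j-1$; this gives one section. To get more, fix one line, say $L_n$, and consider forms of the shape $\ell_1\cdots\ell_{n-1}\cdot \lambda$ where $\lambda$ ranges over all linear forms: since $\ell_1\cdots\ell_{n-1}$ already vanishes to order $\ge m_j-1$ at every $p_j$ (each $p_j$ lies on at least $m_j-1$ of the first $n-1$ lines, as it lies on at most one of them being $L_n$ — more carefully, $p_j$ lies on $m_j$ lines total, so on at least $m_j-1$ of $L_1,\dots,L_{n-1}$), every such product lies in $H^0(S,\cO(A))$. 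The space of linear forms $\lambda$ is $3$-dimensional, giving a $3$-dimensional subspace. More generally, for each of the $n$ choices of a line $L_k$ to omit, the $3$-dimensional space $\ell_1\cdots\widehat{\ell_k}\cdots\ell_n\cdot\{\text{linear forms}\}$ lies in $H^0(S,\cO(A))$, and these contain the common section $\ell_1\cdots\ell_n$ together with the $n$ further sections $\ell_1\cdots\widehat{\ell_k}\cdots\ell_n\cdot \ell_k = \ell_1\cdots\ell_n$... so I must be careful to extract independent elements rather than double-count.

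The clean bookkeeping is as follows: the section $s_0:=\ell_1\cdots\ell_n$ together with, for each $k$, the two sections $\ell_1\cdots\widehat{\ell_k}\cdots\ell_n\cdot\lambda$ for $\lambda$ ranging over a basis of the $2$-dimensional space of linear forms not proportional to $\ell_k$, gives $1+2n$ sections, and I would check these $2n+1$ sections are linearly independent. Independence is visible by restriction: $s_0$ vanishes on every $L_i$, whereas $\ell_1\cdots\widehat{\ell_k}\cdots\ell_n\cdot\lambda$ restricted to $L_k$ is a nonzero degree-one form on $L_k\cong\PP^1$ (here we use simplicity — no two lines coincide — so the other $\ell_i$ do not vanish identically on $L_k$, and $\lambda\not\propto\ell_k$), and as $\lambda$ varies over a $2$-dimensional complement these restrictions span a $2$-dimensional space of forms on $L_k$; meanwhile the sections attached to $k'\ne k$ restrict to $0$ on $L_k$. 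A straightforward triangularity argument with respect to these restriction maps then yields $\dim H^0(S,\cO(A))\ge 2n+1$.

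The main obstacle I anticipate is the careful multiplicity computation ensuring each exhibited form actually lies in $H^0(S,\cO(A))$ — i.e. that $\ell_1\cdots\widehat{\ell_k}\cdots\ell_n\cdot\lambda$ vanishes to order $\ge m_j-1$ at every intersection point $p_j$ — together with the independence argument at the degenerate points where several lines meet. Both reduce to the combinatorial fact that a point of the arrangement lying on $m_j$ lines lies on at least $m_j-1$ of any $n-1$ of them; the independence is then handled component by component on the subcurve $\widetilde L_1\cup\cdots\cup\widetilde L_n$ as sketched above.
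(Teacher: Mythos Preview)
Your argument is correct and takes a genuinely different route from the paper. The paper computes $\chi(\cO(A))$ via Riemann-Roch for surfaces, obtaining $\chi(\cO(A))=2n+1$ after a combinatorial identity for $\sum_f \lvert f\rvert(\lvert f\rvert-1)$, and then uses Serre duality to show $H^2(S,\cO(A))=0$ (since $K_S-A$ pushes forward to a negative multiple of~$H$), yielding $h^0\ge\chi=2n+1$. Your approach is instead entirely elementary and constructive: you identify $H^0(S,\cO(A))$ with degree-$n$ forms vanishing to order $\ge m_j-1$ at each intersection point, and then exhibit $2n+1$ such forms explicitly as $\ell_1\cdots\ell_n$ together with $\prod_{i\ne k}\ell_i\cdot\lambda$ for two independent linear forms $\lambda$ per line. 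The restriction-to-$L_k$ triangularity argument for independence is clean and works exactly as you outline. What you gain is avoidance of surface Riemann-Roch and Serre duality, plus an explicit basis (up to $2n+1$ elements) of the linear system; what the paper's approach gains is the stronger statement $h^0-h^1=2n+1$, so that equality $h^0=2n+1$ would force $h^1=0$, and a framework that would adapt more readily to variants where writing down sections by hand is harder.
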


\begin{proof}
We first use Riemann-Roch to compute that $\chi(\cO(A))$ is $2n+1$. Let $m$ be the
number of complete flags of~$M$, the matroid of the line arrangement~$A$.
We let $H$ denote
the pullback of the class of a line on $\PP^2$ and $C_f$ to denote the
exceptional lines. Then, we have the following linear equivalences
\begin{align*}
A &\sim nH - \sum_{f} (\lvert f \rvert - 1) C_f \\
K_S &\sim -3H + \sum_f C_f
\end{align*}
Now, Riemann-Roch for surfaces tells us that
\begin{align}
\chi(\cO(A)) = \frac{A^2 - A \cdot K_S}{2} + 1
&= \frac{n^2 - \sum_f (\lvert f \rvert - 1)^2  + 3n
- \sum_f (\lvert f \rvert -1)}{2} + 1 \notag \\
&= \frac{n^2 + 3n - \sum_{f} \lvert f \rvert (\lvert f \rvert - 1)}{2} + 1.
\label{eq:riemann-roch}
\end{align}
We can think of the summation $\sum_f \lvert f \rvert(\lvert f \rvert - 1)$ as
an enumeration of all triples of a flat and two distinct elements of the flat.
Since two distinct elements uniquely determine a flat, we have the identity that
$\sum_f \lvert f\rvert^2 = n(n-1)$, so (\ref{eq:riemann-roch}) simplifies to
$\chi(\cO(A)) = 2n + 1$.

It now suffices to prove that $H^2(S, \cO(A))$ is zero, which is equivalent, by
Serre duality, to showing that
$K_S-A$ is not linearly equivalent to
an effective divisor. The push-forward of $K_S - A$ to $\PP^2$ is $-(n+3)H$,
which is not linearly equivalent to an effective divisor, and thus
$H^2(S, \cO(A))$ must be zero. Therefore,
\begin{equation*}
\chi(\cO(A)) = H^0(S, \mathcal O(A)) - H^1(S, \mathcal
O(A)) \leq H^0(S, \mathcal O(A)),
\end{equation*}
which together with the computation above yields the desired inequality.
\end{proof}

\begin{proof}[Proof of Theorem~\ref{thm:construction}]
We first assume that $M$ is realizable over~$R$, and then at the end, we will
handle Galois-invariant realizations over extensions.
Thus, we can fix a dual realization of~$M$ as a set of lines in~$\PP_R^2$, and
let $S$ be
the blow-up of $\PP_R^2$ at all the points of intersections of the lines.
We let the divisor~$A \subset S$ be the sum of the strict transforms of the
lines and the exceptional divisors. Note that $A$ is a simple normal
crossing divisor whose dual complex is~$\Gamma_M$. As in the proof of
Theorem~\ref{thm:matroid-lifting}, we denote the components of~$A$ as~$C_f$
and~$C_e$ corresponding to a flat~$f$ and an element~$e$ of~$M$ respectively.

We claim that $A$ is a base-point-free divisor on~$S$. Any two lines of
the matroid configuration are linearly equivalent in $\PP_R^2$. The preimage of
a linear equivalence between lines corresponding to elements~$e$ and~$e'$
is the divisor:
\begin{equation*}
[C_{e'}] - [C_e] + \sum_{f \colon e' \in f, e \not\in f} [C_f]
- \sum_{f \colon e \in f, e' \not\in f} [C_f].
\end{equation*}
Thus, we have a linear equivalence between $A$ and a divisor which does not
contain~$C_e$, nor $C_f$ for any of the flats containing $e$ but not $e'$. By
varying $e$ and $e'$, we get linearly equivalent divisors whose common
intersection is empty.

We now look for a function~$g \in H^0(S, \cO(A)) \otimes_R k$ which does not
vanish at the nodes of~$A$. For each of the $m$~nodes, the condition of
vanishing at that node amounts to one linear condition on $H^0(S, \cO(A))
\otimes_R k$. Since $A$ is base-point-free, this is a non-trivial linear
condition, defining a hyperplane. Moreover, because of the degrees of the
intersection of~$A$ with its components, the only functions vanishing on all of
the nodes are multiples of the defining equation of~$A$. If the residue field is
sufficiently large, then we can find an element $g \in H^0(S, \cO(A)) \otimes_R
k$ avoiding these hyperplanes, and $\lvert k \rvert > m - 2n + 1$ is sufficient
by Lemmas~\ref{lem:euler} and~\ref{lem:avoidance}. Now we lift $g$ to
$\widetilde g \in H^0(S, \cO(A))$, and set $\X$ to be the scheme defined by $h +
\pi \widetilde g$, where $h$ is the defining equation of~$A$ and $\pi$ is a
uniformizer of~$R$. It is clear that $\X$ is a flat family of curves over $R$
whose special fiber is $A$ and thus has dual graph $\Gamma_M$. It remains to
check that $\X$ is regular and for this it is sufficient to check the nodes of
$A_k$. In the local ring of a node, $h$ is in the square of the maximal ideal,
but by construction $\pi \widetilde g$ is not, and thus, at this point $\X$ is
regular.

Finally, we can take $D$ to be the preimage of any line in $\PP_R^2$ which
misses the points of intersection. Again, by Lemma~\ref{lem:avoidance} below, it
is sufficient that $\lvert k\rvert > \ell - 2$, where $\ell$ is the number
of flats. We claim that $m - 2n + 1 \geq \ell - 2$, and we have assumed that
$\lvert k \rvert > m-2n+1$.
This claimed inequality can be proved using induction similar to the proof of
Theorem~\ref{thm:classification}, but it also follows from Riemann-Roch for
graphs~\cite[Thm.~1.12]{baker-norine-rr}. Since $\Gamma_M$ has genus $m - \ell -
n + 1$, then the Riemann-Roch inequality tells us that
\begin{equation*}
2 = r(D_M) \geq n - (m - \ell - n + 1) = \ell - m + 2n - 1,
\end{equation*}
which is equivalent to the claimed inequality.

Now, we assume that $M$ may only have a Galois-invariant realization over an
extension of~$R$. We can construct the blow-up~$S$ in the same way, since the
singular locus of the line configuration is defined over~$R$. Again, the
divisor~$A$ is base-point-free, because we have already checked that it is base
point free after passing to an extension where the lines are defined.
Finally, we need to choose the function~$g$ and the line which pulls back
to~$D$ by avoiding certain linear conditions defined over an extension of~$k$.
However, when restricted to~$k$, these remain linear conditions, possibly of
higher codimension, so we can again avoid them under our hypothesis on~$\lvert
k\rvert$.
\end{proof}

\begin{lem}\label{lem:avoidance}
Let $H_1, \ldots, H_m$ be hyperplanes in the vector space~$k^N$, where $k$ is a
field. Let $c$ denote the codimension of the intersection $H_1 \cap \cdots \cap
H_m$. If $k$ is infinite or if $k$ is the finite field with $q$ elements and $q
> m - c + 1$, then there exists a point in $k^N$ not contained in any
hyperplane.
\end{lem}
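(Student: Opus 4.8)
The plan is to handle the two cases of the hypothesis separately, the finite field case being the real content. If $k$ is infinite, I would invoke the classical fact that an affine space over an infinite field is not a finite union of proper affine subspaces: by induction on $m$, choose a point $p$ avoiding $H_1,\dots,H_{m-1}$, and if $p$ happens to lie on $H_m$, pick a vector $w$ with $\ell_m(w)\neq 0$, writing $\ell_i$ for a linear functional whose vanishing, up to an additive constant, defines $H_i$; the line $\{p+tw:t\in k\}$ then meets each $H_i$ in at most one point, so at most $m$ values of $t$ are forbidden while infinitely many are available. The point of the finite case is to beat the trivial estimate: the union bound $\lvert H_1\cup\cdots\cup H_m\rvert\le m\,q^{N-1}$ only rules out a covering when $q>m$, whereas we must reach $q>m-c+1$.

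For the finite case, write $c=\dim_k\operatorname{span}(\ell_1,\dots,\ell_m)$, which equals the codimension of the (nonempty) intersection, and prove by induction on $c$ the following, which does not require the hyperplanes to be distinct or their intersection to be nonempty: for affine hyperplanes $H_1,\dots,H_m$ in $k^N=\FF_q^N$ whose linear parts span a space of dimension $c$, if $q>m-c+1$ then some point of $\FF_q^N$ lies on none of them. The base cases $c\le 1$ are a pigeonhole argument: either $m=0$, or every $H_i$ is a translate of the single hyperplane $\ker\ell_1$, and since $q>m$ these translates occupy fewer than $q$ of its $q$ cosets, leaving a free one.

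For the inductive step $c\ge 2$, let $s$ be the number of indices $i$ for which $\ell_i$ is proportional to $\ell_1$. Those $s$ hyperplanes contribute only a one-dimensional subspace to $\operatorname{span}(\ell_1,\dots,\ell_m)$, so $c\le 1+(m-s)$, hence $1\le s\le m-c+1\le q-1$. Therefore among the $q$ level sets of $\ell_1$ there is one, $P=\{\ell_1=t\}$, disjoint from all $s$ of those hyperplanes; identify $P\cong\FF_q^{N-1}$. The remaining $m-s$ hyperplanes meet $P$ in $m-s$ affine hyperplanes of $P$, and computing with the restriction map $\operatorname{span}(\ell_1,\dots,\ell_m)\to(\ker\ell_1)^\ast$—whose kernel is exactly $\langle\ell_1\rangle$, since the functionals vanishing on the hyperplane $\ker\ell_1$ are the multiples of $\ell_1$—shows that their linear parts span a space of dimension $c-1$. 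Because $s\ge 1$, the hypothesis $q>m-c+1$ implies $q>(m-s)-(c-1)+1$, so the inductive hypothesis applies on $P$ and produces a point of $P$ on none of those $m-s$ hyperplanes; lying on $P$, it also avoids the $s$ hyperplanes disjoint from $P$, and so it avoids all $m$ of the $H_i$.

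I expect the main obstacle to be precisely this sharpness: recovering the saving of $c-1$ over the union bound forces one to exploit that a rank-$c$ arrangement cannot be too spread out, and slicing by a level set of $\ell_1$ is the device that turns this into an induction. The inequality $s\le m-c+1$ is the load-bearing estimate, used twice at once—to guarantee a free level set exists and to guarantee the inductive hypothesis holds after the slice—so the bookkeeping must be arranged so that $m$ drops by $s$ exactly when $c$ drops by $1$. I would also record the small checks: $m-s\ge 1$ automatically once $c\ge 2$, $q\ge 2$ throughout since $m\ge c$ forces $m-c+1\ge 1$, and finally that the stated lemma is the special case $c=\operatorname{codim}(H_1\cap\cdots\cap H_m)$, legitimate because that codimension agrees with the rank of the linear parts whenever the intersection is nonempty.
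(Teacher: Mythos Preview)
Your argument is correct, but the paper takes a different and more direct route in the finite case. Rather than inducting on~$c$, the paper first quotients by $H_1\cap\cdots\cap H_m$ to reduce to $k^c$ with trivial common intersection; then the defining functionals span the dual space, so after reordering one may take $H_1,\ldots,H_c$ to be the coordinate hyperplanes. Their complement is the open torus $(k^\times)^c$ of size $(q-1)^c$, and each of the remaining $m-c$ hyperplanes meets this torus in at most $(q-1)^{c-1}$ points (project off a coordinate with nonzero coefficient); the hypothesis $q-1>m-c$ then gives the conclusion by a single subtraction. Your slicing induction is a perfectly good alternative and even proves a slightly stronger statement (affine hyperplanes, possibly empty intersection, with $c$ reinterpreted as the rank of the linear parts), at the cost of more bookkeeping. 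The paper's approach buys brevity and avoids the inductive machinery; yours buys the affine generality, which is not needed here since the hyperplanes in the application are linear.
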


\begin{proof}
If $k$ is infinite, the statement is clear, so we assume that $k$ is finite with
$q$ elements.
We first quotient out by the intersection $H_1 \cap \cdots \cap H_m$, so we are
working in a vector space of dimension $c$ and we know that no non-zero vector
is contained in all hyperplanes. This means that the vectors defining the
hyperplanes span the dual vector space, so we can choose a subset as a basis.
Thus, we assume that the first $c$ hyperplanes are the coordinate hyperplanes.
The complement of these consists of all vectors with non-zero coordinates, of
which there are $(q-1)^c$. Each of the remaining $m-c$ hyperplanes contains at
most $(q-1)^{c-1}$ of these. Our assumption is that $q-1 > m-c$, so there must
be at least one point not contained in any of the hyperplanes.
\end{proof}

We illustrate Theorems~\ref{thm:matroid-lifting} and~\ref{thm:construction} and
highlight the difference between their conditions with the following two
examples.

\begin{ex}
Let $M$ be the Fano matroid, which whose realization in $\mathbb P^2_{\mathbb
F_2}$ consists of all $7$ $\mathbb F_2$-points.
Then $M$ is realizable
over a field if and only if the field has equicharacteristic~$2$.
Thus, by Theorem~\ref{thm:matroid-lifting}, a necessary condition for $\Gamma_M$
and $D_M$ to lift over a valuation ring~$R$ is that the residue field of~$R$ has
characteristic~$2$. On the other hand, $M$ has $7$ elements and $21$
complete flags, so Theorem~\ref{thm:construction} says that if $R$ has
equicharacteristic~$2$ and the residue field of~$R$ has more than $8$ elements,
then $\Gamma_M$ and~$D_M$ lift over~$R$. We do not know if there exists a lift
of $\Gamma_M$ and~$D_M$ over any valuation ring of mixed characteristic~$2$.
\end{ex}

\begin{ex}
One the other hand, let $M$ be the non-Fano matroid, which is realizable over~$k$ if and
only $k$ has characteristic not equal to~$2$. Moreover, $M$ is realizable over
any valuation ring~$R$ in which $2$ is invertible. Thus, $\Gamma_M$ and $D_M$
lift over a valuation ring $R$, only if the residue field of~$R$ has
characteristic different than~$2$ by Theorem~\ref{thm:matroid-lifting}. The
converse is true, so long as the residue field has more than~$11$ elements by
Theorem~\ref{thm:construction}.
\end{ex}

Since
Theorems~\ref{thm:matroid-lifting} and~\ref{thm:construction} refer to
Galois-invariant realizations, we will need the following lemma to relate such
realizations with ordinary matroid realizations.

\begin{lem}\label{lem:break-symmetry}
Let $M$ be a matroid of rank 3. Then there exists a matroid~$M'$ such that for any
infinite field~$k$, the following are equivalent:
\begin{enumerate}
\item $M$ has a realization over~$k$.
\item $M'$ has a realization over~$k$.
\item $M'$ has a Galois-invariant realization over an extension of~$k$.
\end{enumerate}
\end{lem}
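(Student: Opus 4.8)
The plan is to produce from $M$ a matroid $M'$ that contains $M$ as the restriction to the original ground set, that is \emph{rigid} (its only automorphism is the identity), and for which, over any infinite field, a realization of $M$ can be enlarged to a realization of $M'$ by adjoining the new points in sufficiently general position. Granting such an $M'$, the three implications follow quickly. For $(2)\Rightarrow(1)$, restrict a realization of $M'$ to the ground set of $M$. For $(1)\Rightarrow(2)$, this is the extension property. For $(2)\Rightarrow(3)$, a realization over $k$ is in particular a Galois-invariant realization over the trivial extension. The implication $(3)\Rightarrow(2)$ is the one that uses rigidity: if $M'$ is realized by distinct points $p_1,\dots,p_N\in\PP^2(k')$ for a finite Galois extension $k'/k$ with $\Gal(k'/k)$ permuting the point set, then each $\sigma\in\Gal(k'/k)$ acts on $\PP^2_{k'}$ through the field automorphism, preserves collinearity of triples, and therefore induces a matroid automorphism of $M'$; rigidity forces this automorphism to be the identity, so $\sigma(p_j)=p_j$ for every $j$, hence every $p_j$ lies in $\PP^2(k)$ and the $p_j$ already realize $M'$ over $k$.

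It remains to construct $M'$. Fix a labeling $e_1,\dots,e_n$ of the elements of $M$; the idea is to attach to each $e_i$ a small internally rigid ``tag'', a different one for each index. A convenient supply of tags comes from graphs: for each $i$ choose pairwise non-isomorphic graphs $H_i$ with no nontrivial automorphisms and minimum degree at least four (asymmetric graphs satisfying such a degree bound exist in every sufficiently large order). Let $\Lambda_i$ be the rank-$3$ matroid with one element per edge of $H_i$ and, for each vertex, one rank-$2$ flat consisting of the edges meeting it; by Whitney's theorem $\operatorname{Aut}(\Lambda_i)\isom\operatorname{Aut}(H_i)$ is trivial, and an arrangement of $|V(H_i)|$ generic lines, one per vertex, with the intersection points retained as elements, realizes $\Lambda_i$ over any infinite field. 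Build $M'$ by identifying, for each $i$, a chosen edge of $H_i$ with the element $e_i$ (so $p_{e_i}$ becomes the common point of the two corresponding tag lines) and leaving all other incidences free. Given a realization of $M$ over an infinite field $k$, realize each $\Lambda_i$ by placing its lines in general position subject only to the two designated ones passing through $p_{e_i}$; since $k$ is infinite only finitely many proper linear conditions must be avoided, so this is possible and introduces no unintended collinearities, and the configuration realizes exactly $M'$.

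Finally one checks that $M'$ is rigid. An automorphism $\alpha$ of $M'$ permutes the rank-$2$ flats preserving incidence; because the tags are glued only at the $e_i$ and otherwise freely, the ``share an element'' graph on the large flats has connected components the blocks of flats of the individual $\Lambda_i$ together with the block of large flats of $M$, and these last are distinguished from tag flats by the degrees (number of large flats) of their elements, so $\alpha$ permutes the $\Lambda_i$-blocks among themselves; restricted to a block, $\alpha$ is an isomorphism $H_i\cong H_{\pi(i)}$, so $\pi$ is trivial, so $\alpha$ preserves each $\Lambda_i$ and, $\Lambda_i$ being rigid, fixes it pointwise, in particular fixing every $e_i$ and hence every flat of $M$, so $\alpha$ is the identity. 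I expect this last verification to be the main obstacle: one must choose the $H_i$ and the gluing so that tag elements can never be confused with elements of $M$ (in the extreme case that $M$ is uniform, $M'$ is just the line-arrangement matroid of the disjoint union of the $H_i$, which is already rigid; in general one may need to route, for each $i$, one additional tag line through $p_{e_i}$ and two further tag points so that $e_i$ lies on at least three large flats), and so that no Pappus- or Desargues-type incidence is forced among the tags or between a tag and $M$. None of this is conceptually deep, but arranging a clean, uniformly valid construction is where the work lies.
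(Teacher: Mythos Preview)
Your overall strategy is sound and would work if carried through, but it is considerably more elaborate than what the paper does, and you yourself flag that the rigidity verification is not complete. The paper's construction is much lighter: rather than forcing $M'$ to have trivial automorphism group, it only arranges that the flats \emph{coming from~$M$} are pairwise distinguishable by their cardinalities. Concretely, given a flat~$f$ of~$M$, one forms the single-element extension $M''$ by adding a new element~$x$ to~$f$ (and two-element flats $\{x,e'\}$ for each $e'\notin f$); iterating this, one reaches an $M'$ in which the flats inherited from~$M$ all have different sizes. For $(1)\Rightarrow(2)$ one places each new~$x$ at a generic point of the corresponding line, which is possible over any infinite field. For $(3)\Rightarrow(1)$, a Galois-invariant realization of~$M'$ over~$k'/k$ has the Galois group permuting the configuration, but since the $M$-flats have pairwise distinct sizes, each such line is individually Galois-stable and hence defined over~$k$; their intersection points, i.e.\ the elements of~$M$, are then defined over~$k$ as well. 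Note that the paper closes the cycle via $(3)\Rightarrow(1)$ rather than your $(3)\Rightarrow(2)$, which is exactly what allows the weaker ``distinguish the $M$-flats'' condition in place of full rigidity of~$M'$.

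Your approach buys a slightly stronger intermediate conclusion (the entire $M'$ descends to~$k$, not just the $M$-part), at the cost of a substantially more delicate construction: asymmetric graphs with minimum degree~$\geq 4$, the line-arrangement matroids~$\Lambda_i$, the gluing, and the combinatorial argument that no automorphism of the glued object can mix tags with~$M$ or permute the tags. The invocation of ``Whitney's theorem'' is not quite the right citation (that theorem concerns cycle matroids and line graphs, whereas your~$\Lambda_i$ is a different rank-$3$ matroid), though the underlying claim that automorphisms of~$\Lambda_i$ correspond to automorphisms of~$H_i$ is correct once the minimum-degree hypothesis separates vertex-star flats from two-element flats. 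More seriously, the final rigidity check---that tag elements cannot be confused with elements of~$M$, and that no forced incidences of Pappus or Desargues type arise---is left as an expectation rather than an argument, and getting a uniform construction that works regardless of the shape of~$M$ would take real work. The paper sidesteps all of this by not asking for rigidity.
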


\begin{proof}
We use the following construction of an extension of a matroid. Suppose that $M$
is a rank~$3$ matroid and $f$ is a flat of~$M$. We construct a matroid~$M''$
which contains the elements of~$M$, together with an additional element~$x$. The
flats of $M''$ are those of~$M$, except that $f$ is replaced by $f \cup \{x\}$,
and two-element flats for $x$ and every element not in~$f$. By
repeating this construction, we can construct a matroid~$M'$ such
every flat which comes from one of the flats of~$M$ has a different number of
elements.

Now we prove that the conditions in the lemma statement are equivalent for this
choice of~$M'$. First, assume that $M$ has a realization over an infinite
field~$k$. We can inductively extend this to a realization of~$M'$. At each
step, when adding an element~$x$ as above, it is sufficient to place $x$ at a
point along the line corresponding to~$f$ such that it does not coincide with
any
of the other points, and it is not contained in any of the lines spanned by two
points not in~$f$. We can choose such a point for~$x$ since $k$ is infinite.
Second, if $M'$ has a realization over $k$, then by definition, it has a
Galois-invariant realization over an extension of~$k$.

Finally, we suppose that $M'$ has a Galois-invariant realization over an
extension of~$k$ and we want to show that $M$ has a realization over~$k$.
Suppose we have a realization over a Galois extension~$k'$ of~$k$. Since all the
flats from the original matroid contain different numbers of points, the Galois
group does not permute the corresponding lines in the realization. Therefore,
the lines and thus also the points from the original matroid~$M$ must be defined
over $k$. Therefore, the restriction of this realization gives a realization
of~$M$ over $k$, which completes the proof of the lemma.
\end{proof}

\begin{proof}[Proof of Theorem~\ref{thm:universality}]
As in the statement of the theorem, let $X$ be a scheme of finite type
over~$\ZZ$. We choose an affine open cover of~$X$ and let $\widetilde X$ be the
disjoint union of these affine schemes. By the scheme-theoretic version of
Mn\"ev's universality theorem, either Theorem~1.14 in \cite{lafforgue} or our
Theorem~\ref{thm:mnev}, there is a matroid~$M$ of rank~3 whose realization space
is isomorphic to an open subset~$U$ of~$\widetilde X \times \mathbb A^N$ and $U$
maps surjectively onto~$X$. Now let $M'$ be the matroid as in
Lemma~\ref{lem:break-symmetry} and we claim that $\Gamma_{M'}$ and~$D_{M'}$ have
the desired properties for the theorem.

Let $k$ be any infinite field, and then $X$ clearly has a $k$-point if and only
if $\widetilde X$ has a $k$-point. Likewise, since $k$ is infinite, any
non-empty subset of $\mathbb A^N_k$ has a $k$-point, so $U$ also has a $k$-point if
and only if $X$ has a $k$-point. By Lemma~\ref{lem:break-symmetry}, these
conditions are equivalent to $M'$ having a Galois-invariant realization over
an extension of~$k$. Supposing that $X$ has a $k$-point and thus $M'$ has a
realization over $k$, then $\Gamma_{M'}$ and $D_{M'}$ have a lifting over
$k[[t]]$ by Theorem~\ref{thm:construction}. Conversely, if $D_{M'}$ has a
lifting over $k[[t]]$, then $M'$ has a Galois-invariant realization over an
extension of~$k$ by Theorem~\ref{thm:matroid-lifting}, and thus $M$ has a
realization over~$k$ by Lemma~\ref{lem:break-symmetry}, so $X$ has a $k$-point.
\end{proof}

\section{Brill-Noether theory}\label{sec:brill-noether}

In this section, we take a detour and look at connections to Brill-Noether
theory and the analogy between limit linear series and tropical divisors.
In the theory of limit linear series, a key technique is the observation
that if the moduli space of limit linear series on the degenerate curve has the
expected dimension then it lifts to a linear
series~\cite[Thm.~3.4]{eisenbud-harris}. Here, the expected dimension of limit
linear series of degree~$d$ and rank~$r$ on a curve of genus~$g$ is $\rho(g, r,
d) = g - (r+1)(g + r - d)$. It is natural to ask if a tropical analogue of this
result is true: if the dimension of the moduli space of divisor classes on a
tropical curve of degree~$d$ and rank at least $r$ has (local) dimension
$\rho(g,r,d)$, then does every such divisor lift?
See~\cite{cjp} for further discussion and one case with an affirmative answer.
The main result of this section is that
the matroid divisors and graphs constructed in this paper do not
provide a negative answer to the above question.

We begin with the following
classification:

\begin{figure}
\includegraphics{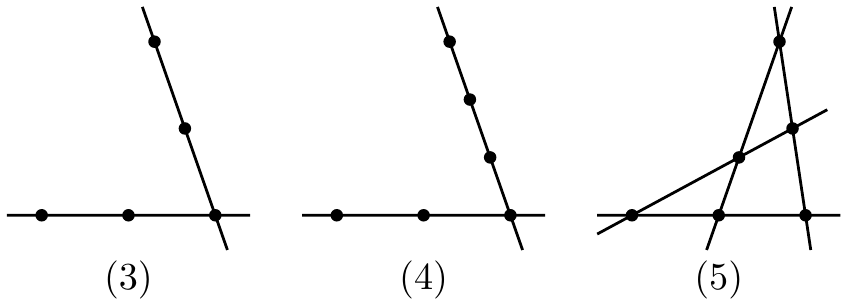}
\caption{Matroids from Theorem~\ref{thm:classification}}
\label{fig:matroids}
\end{figure}

\begin{thm}\label{thm:classification}
Let $M$ be a rank~$3$ simple matroid, with $g$ and~$d$ equal to the genus
of~$\Gamma_M$ and degree of~$D_M$ respectively. If $\rho = \rho(g, 2, d) \geq
0$, then $M$ is one of the following matroids:
\begin{enumerate}
\item\label{mat:extension} The one element extension of the uniform matroid
$U_{2,n-1}$, with $\rho = n-2$.
\item\label{mat:uniform} The uniform matroid $U_{3,4}$, with $\rho = 0$.
\item\label{mat:base} The matroid defined by the vectors: $(1,0,0)$, $(1,0,1)$,
$(0,0,1)$, $(0,1,1)$, $(0,1,0)$, with $\rho = 1$.
\item\label{mat:two-lines} The matroid in the previous example together with
$(1, 0, \lambda)$ for any element $\lambda$ of the field other than $1$ and~$0$,
with $\rho = 0$.
\item\label{mat:four-lines} The matroid consisting of the point of intersection
between any pair in a collection of $4$ generic lines, for which we can take the
coordinates to be the vectors from (3) together with $(1,1,1)$, with $\rho = 0$.
\end{enumerate}
\end{thm}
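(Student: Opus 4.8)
The plan is to extract from the hypothesis $\rho(g,2,d) \geq 0$ a strong numerical constraint on the combinatorial invariants of $M$, and then enumerate the finitely many matroids satisfying it. Recall that $D_M$ has degree $d = n$ (the number of elements), and that $\Gamma_M$ has genus $g = m - \ell - n + 1$, where $\ell = |F|$ is the number of flats and $m = \sum_f |f|$ is the number of complete flags. Plugging $r = 2$ into $\rho(g,2,d) = g - 3(g+2-d)$ gives $\rho = 3d - 2g - 6 = 3n - 2(m-\ell-n+1) - 6 = 5n + 2\ell - 2m - 8$. So the hypothesis $\rho \geq 0$ becomes the inequality
\begin{equation*}
2m - 2\ell \leq 5n - 8, \qquad \text{i.e.} \qquad \sum_{f}\bigl(|f| - 1\bigr) \leq \frac{5n-8}{2}.
\end{equation*}
The left-hand side counts, for each flat, one less than its size; since each pair of elements lies in a unique flat, $\sum_f \binom{|f|}{2} = \binom{n}{2}$. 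The first step is therefore to combine these two facts: convexity of $t \mapsto \binom{t}{2}$ forces the flats to be small, and an averaging/extremal argument will show that for $n$ large the inequality is violated unless almost all flats have size exactly $2$ (the uniform-matroid regime). Concretely, writing $a_j$ for the number of flats of size $j \geq 2$, we have $\sum_j \binom{j}{2} a_j = \binom{n}{2}$ and must bound $\sum_j (j-1) a_j$; since $\binom{j}{2}/(j-1) = j/2$ grows with $j$, the flag-count $\sum (j-1)a_j$ is minimized by using large flats, and maximized (subject to the pair-count constraint) by using flats of size $2$, where it equals $\binom{n}{2}$ — far exceeding $(5n-8)/2$ once $n \geq 5$ or so. Balancing these two bounds pins down $n$ to a small range and, within that range, pins down the multiset $\{|f|\}$.

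The second step is the finite enumeration. Once $n$ is bounded and the possible flat-size profiles are listed, each profile corresponds to only a handful of simple rank-$3$ matroids, which one checks by hand. The $n=3$ case is the one-element extension family~(\ref{mat:extension}) degenerating appropriately; $n=4$ gives $U_{3,4}$~(\ref{mat:uniform}) and the profile with one flat of size $3$; $n=5$ gives the matroid~(\ref{mat:base}) (a "path" of two flats of size $3$ sharing an element) and its variants; and $n=6$ gives matroid~(\ref{mat:four-lines}), the generic arrangement of $4$ lines, which has $\binom{4}{2}=6$ double points as elements and $4$ triple flats — one checks $\rho = 0$ there exactly. Item~(\ref{mat:two-lines}) arises as the $n=6$ case with a different profile (two triple flats through a common... actually two lines' worth of collinear points). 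For each candidate profile I would verify there are no other matroids with that profile (using the rank-$3$ axioms: any two elements in a unique flat) and compute $\rho$ to confirm the value listed; this is routine but must be done case by case, aided by Figure~\ref{fig:matroids}.

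The main obstacle is making the extremal argument in the first step tight enough to rule out the "medium-$n$" cases cleanly rather than leaving a large residual list — in particular handling profiles that mix one or two moderately large flats with many size-$2$ flats, where the two constraints $\sum \binom{|f|}{2} = \binom{n}{2}$ and $\sum(|f|-1) \leq (5n-8)/2$ interact less transparently. I expect the clean way through is to rewrite the target quantity as $\sum_f (|f|-1) = \sum_f |f| - \ell$ and use that $\sum_f|f|(|f|-1) = n(n-1)$ together with $|f| \geq 2$ to get $\sum_f(|f|-1) \geq \frac{1}{2}\sum_f |f| \geq$ something forcing, and simultaneously a lower bound on $\sum_f(|f|-1)$ coming from the fact that each of the $n$ elements lies in at least two flats (simplicity plus rank $3$), giving $m = \sum_f |f| \geq 2n$ and hence $\sum_f(|f|-1) = m - \ell \geq 2n - \ell$; then bounding $\ell$ from the pair-count closes the loop. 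Once the arithmetic is arranged this way the bound on $n$ should drop out with only a short finite check remaining.
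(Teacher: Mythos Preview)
Your formula $\rho = 5n + 2\ell - 2m - 8$ is correct and matches the paper's derivation. But there is a genuine gap in the plan: you assert that the extremal argument will ``pin down $n$ to a small range,'' and your enumeration then treats case~(\ref{mat:extension}) as ``the $n=3$ case.'' In fact case~(\ref{mat:extension}) is an \emph{infinite} family: for every $n \geq 3$, the one-element extension of $U_{2,n-1}$ (i.e.\ $n-1$ collinear points plus one point off the line) is a rank-$3$ simple matroid with $\rho = n-2 \geq 0$. For this family $m - \ell = 2n-3$, which satisfies your inequality $m-\ell \leq (5n-8)/2$ for all $n$, so no averaging or convexity bound on $\sum_f(|f|-1)$ alone can bound $n$. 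Your own observation that $\sum(|f|-1)$ is \emph{minimized} by one large flat already shows this: the minimum sits comfortably below the threshold forever. Any direct extremal argument would have to first isolate this infinite family (e.g.\ by showing that $\rho \geq 0$ forces a flat of size $\geq 2n(n-1)/(5n-8)$, and then separately analyzing what happens when that flat has size exactly $n-1$ versus at most $n-2$), which your proposal does not do.

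The paper takes a different and cleaner route that sidesteps this issue entirely: it argues by induction on $n$ via single-element deletion. If $e$ lies in $s$ flats, then deleting $e$ changes $\rho$ by $2s - 5$. Since $s \geq 2$ always, either $s = 2$ (and then the two flats through $e$, of sizes $a+1$ and $b+1$, determine $M$ completely, giving $\rho = -2ab + 3a + 3b - 3$, whose non-negative solutions are exactly cases~(\ref{mat:extension}), (\ref{mat:base}), (\ref{mat:two-lines})), or $s \geq 3$ and $\rho(M \setminus e) > \rho(M) \geq 0$, so by induction $M \setminus e$ is one of the listed matroids with $\rho > 0$, namely (\ref{mat:extension}) or (\ref{mat:base}), and one checks the few possible one-element extensions. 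The infinite family~(\ref{mat:extension}) is absorbed into the base case $s=2$, $a=1$, so no bound on $n$ is ever needed.
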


The last three cases of Theorem~\ref{thm:classification} are illustrated in
Figure~\ref{fig:matroids}.

\begin{proof}
We first compute the invariants for the graph~$\Gamma_D$ and divisor~$D_M$
constructed in Section~\ref{sec:divisors}. As before, we let $n$ be the
number of elements of~$M$, $\ell$ the number of flats, and $m$ the number of
complete flags. Since $\Gamma_D$ consists of $m$ edges and $n+\ell$ vertices, it
has genus $m - n - \ell + 1$. It is also immediate from its definition that
$D_M$ has degree~$n$. Thus, the expected dimension of rank~$2$ divisors is
\begin{equation}\label{eq:rho}
\rho = 
m - n - \ell + 1 - 3((m - n - \ell + 1) + 2 - n)
= 5n + 2 \ell - 2 m - 8
\end{equation}

Now, assume that $\rho$ is non-negative for~$M$ and we consider what what
happens to~$\rho$ when we remove a single element~$e$
from a matroid, where
$e$ is not contained in all bases. For every flat containing $e$, we decrease
the number of complete flags by~$1$ if that flat contains at least $3$ elements,
and if it contains $2$ elements, then we decrease the number of flags by $2$ and
the number of flats by~$1$. Thus, by~(\ref{eq:rho}), $\rho$ drops by $5-2s$,
where $s$ is the number of flats in~$M$ which contain~$e$. Since $e$ must be
contained in at least $2$~flats, either $M \setminus e$, the matroid formed by
removing $e$ has positive $\rho$ or $e$ is contained in exactly $2$ flats.

We first consider the latter case, in which $e$ is contained in exactly two
flats, which we assume to have cardinality $a+1$ and $b+1$ respectively. The
integers $a$ and~$b$ completely determine the matroid because all the other
flats consist of a pair of elements, one from each of these sets. Thus, there
are $ab+2$ flats and $2ab + a + b + 2$ complete flags. By using (\ref{eq:rho}),
we get
\begin{equation*}
\rho = 5(a + b + 1) + 2(ab + 2) - 2(2ab + a + b + 2) - 8
= -2ab + 3a + 3b -3.
\end{equation*}
One can check that, up to swapping $a$ and~$b$, the only non-negative
values of this expression are when $a = 1$ and $b$ is arbitrary or $a=2$ and $b$
is $2$ or~$3$. These correspond to cases (\ref{mat:extension}),
(\ref{mat:base}), and (\ref{mat:two-lines}) respectively from the theorem
statement.

Now we consider the case that $e$ contained in more than two flats, in which
case $M \setminus e$ satisfies $\rho > 0$. By induction on the number of
elements, we can assume that $M \setminus e$ is on our list, in which case the
possibilities with $\rho > 0$ are (\ref{mat:extension}) and
case~(\ref{mat:base}). For the former matroid, if $e$ is contained in a flat of
$M \setminus e$, then $M$ is a matroid of the type from the previous paragraph,
with $a$ equal to $1$ or~$2$. On the other hand, if $e$ contained only in
$2$-element flats, then $e$ is contained in $n-1$ flats, so
\begin{equation*}
\rho(M) = \rho(M \setminus e) + 5 - 2(n-1) = (n-3) + 7 - 2n = 4 - n.
\end{equation*}
The only possibility is $n=4$, for which we get (\ref{mat:uniform}), the
uniform matroid. Finally, if $M \setminus e$ is the matroid in
case~(\ref{mat:base}), then the only relevant possibilities are those for which
$e$ is contained in at most $3$ flats, for which the possible matroids are
(\ref{mat:two-lines}) or~(\ref{mat:four-lines}).
\end{proof}

\begin{prop}
If $R$ is a DVR and $M$ is one of the matroids in
Theorem~\ref{thm:classification}, then $M$ has a Galois-invariant realization
over an extension of~$R$.
\end{prop}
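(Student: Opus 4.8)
The plan is to verify the statement matroid by matroid, going through the five types listed in Theorem~\ref{thm:classification}; in each case I would produce a finite flat subscheme $Z \subset \PP^2_R$ whose special and general fibers become, over an algebraic closure of the residue field~$k$, respectively of the fraction field~$K$, configurations of distinct points realizing~$M$. For three of the five matroids an explicit realization defined over~$\ZZ$ is available, and base change along $\ZZ \to R$ finishes the argument; for the remaining two the residue field of~$R$ may be too small for~$M$ to be realized by $k$-rational points, and there I would instead place the elements of the largest flat as the geometric points of a reduced closed subscheme of a line.

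For cases~(\ref{mat:uniform}), (\ref{mat:base}) and~(\ref{mat:four-lines}), the integer vectors recorded in Theorem~\ref{thm:classification} already give a realization over~$\ZZ$. I would check, by a handful of small $3 \times 3$ determinant computations, that every triple of elements forming a flat has vanishing integer determinant, that every basis has determinant $\pm 1$, and that the listed vectors have pairwise distinct reductions modulo every prime. It follows that no collinearity among these points is created or destroyed in any characteristic, so the configuration base changes along $\ZZ \to R$ to a collection of $R$-points whose images in both~$k$ and~$K$ realize~$M$; such a realization over~$R$ is in particular a Galois-invariant realization over an extension of~$R$.

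For cases~(\ref{mat:extension}) and~(\ref{mat:two-lines}) I would argue directly. In case~(\ref{mat:extension}), $M$ has a distinguished flat $f_0$ of cardinality $N = n - 1$ consisting of collinear elements, together with one further element lying off $f_0$ and joined to each element of $f_0$ by a two-element flat. In case~(\ref{mat:two-lines}), $M$ has a flat $f_0$ of cardinality $N = 4$ and a second flat $f_1$ of cardinality $3$ meeting $f_0$ in a single element $e_0$, all remaining flats having two elements. In either case, fix a line $L \subset \PP^2_R$ and take $W := \{g = 0\} \cap L$, where $g$ is a homogeneous form of degree $N$ in the two coordinates of~$L$ whose reduction modulo~$\m$ is squarefree (and, in case~(\ref{mat:two-lines}), which is required to vanish, with multiplicity one, at the point of~$L$ designated to be~$e_0$ --- a condition that costs nothing). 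Such a form exists over~$k$ because every field carries a squarefree binary form of any degree: a product of distinct linear forms over an infinite field, or the homogenization of an irreducible, hence separable, polynomial over a finite field. Moreover any lift of it to~$R$ remains squarefree over~$K$, since the discriminant of~$g$ reduces to the nonzero discriminant of its reduction and is therefore a unit in~$R$. Thus $W$ is finite flat over~$R$ of degree~$N$ with geometrically reduced fibers. To~$W$ I would adjoin, as disjoint $R$-sections, the remaining element of~$M$ at a point off~$L$ in case~(\ref{mat:extension}), respectively the two elements of $f_1 \setminus \{e_0\}$ at distinct points, off~$L$, of a second line through the point of~$W$ corresponding to~$e_0$ in case~(\ref{mat:two-lines}). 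The union $Z$ of~$W$ with these sections is finite flat over~$R$, and over~$\overline k$ and over~$\overline K$ the resulting configuration realizes exactly the flats of~$M$: the only points of~$Z$ lying on~$L$ are those of~$W$, so any line through a point of~$W$ and a point of~$Z$ outside~$W$ meets~$W$ in a single point, and one checks similarly that the second line in case~(\ref{mat:two-lines}) carries no point of~$W$ other than~$e_0$; hence no flats beyond those of~$M$ appear. Therefore $Z$ is the required Galois-invariant realization over an extension of~$R$.

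The main obstacle, and the reason the weaker notion of a Galois-invariant realization over an extension is the correct one here, is cases~(\ref{mat:extension}) and~(\ref{mat:two-lines}) over discrete valuation rings whose residue field is~$\FF_2$ (and, in case~(\ref{mat:extension}), over any~$R$ whose residue field has fewer than $n - 2$ elements), where $M$ simply cannot be realized by $k$-rational points. What resolves this is that a geometrically reduced closed subscheme of a line of the prescribed degree always exists over~$k$ and always lifts to~$R$, because squarefreeness of the reduction is an open condition cut out by nonvanishing of the discriminant. The only routine parts of the argument are the determinant bookkeeping in cases~(\ref{mat:uniform}), (\ref{mat:base}) and~(\ref{mat:four-lines}), and the verification in cases~(\ref{mat:extension}) and~(\ref{mat:two-lines}) that the constructed configurations carry no spurious flats.
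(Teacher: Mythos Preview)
Your proposal is correct and follows essentially the same route as the paper: cases~(\ref{mat:uniform}), (\ref{mat:base}), (\ref{mat:four-lines}) are handled via the explicit realization over~$\ZZ$, and cases~(\ref{mat:extension}), (\ref{mat:two-lines}) by placing the large flat as the zero locus of a polynomial with squarefree reduction and adjoining the remaining elements as $R$-sections. Your treatment is somewhat more uniform and scheme-theoretic, and you spell out case~(\ref{mat:two-lines}) where the paper only says ``the other case is similar''; the one cosmetic wrinkle is that in case~(\ref{mat:two-lines}) the form $g$ cannot literally be taken irreducible (it must split off the linear factor at~$e_0$), but your parenthetical ``a condition that costs nothing'' is easily justified by taking $g$ as that linear factor times an irreducible form of degree~$3$.
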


\begin{proof}
The matroids~(\ref{mat:uniform}), (\ref{mat:base}) and~(\ref{mat:four-lines})
are regular matroids, i.e.\ realizable over~$\ZZ$, so they are \emph{a fortiori}
realizable over any DVR. Moreover, the other matroids in
case~(\ref{mat:extension}) and~(\ref{mat:two-lines}) are realizable over $R$ so
long as the residue field has at least $n-2$ and $3$ elements respectively. We
will show that if the residue field is finite, then the one-element extension of
$U_{2,n-1}$ has a Galois-invariant realization over~$R$. The other case is
similar.

Let $M$ be the one-element extension of~$U_{2,n-1}$ and suppose the residue
field~$k$ is finite. We choose a polynomial with coefficients of
degree~$n-1$ in~$R$ whose reduction to~$k$ is square-free. Adjoining the
roots of this polynomial defines an unramified extension~$R'$ of~$R$, and we
write $a_1, \ldots, a_{n-1}$ for its roots in~$R$. Then, the vectors $(1,a_1,0),
\ldots, (1,a_{n-1}, 0), (0, 0,1)$ give a Galois-invariant realization of~$M$
over~$R'$, which is what we wanted to show.
\end{proof}

\section{Quantitative Mn\"ev universality}\label{sec:mnev}

In this section, we prove a quantitative version of Mn\"ev universality over
$\Spec \ZZ$ with Theorem~\ref{thm:quantitative-characteristic} as our desired
application. We follow the strategy of \cite[Thm.~1.14]{lafforgue}, but use the
more efficient building blocks used in, for example,~\cite{lee-vakil}. We pay close attention
to the number of points used in our construction in order to get effective
bounds on the degree of the corresponding matroid divisor. These bounds are
expressed in terms of the following representation.

\begin{defn}\label{def:elementary-monic}
Let $S_n$ denote the polynomial ring $\ZZ[y_1,
\ldots, y_n]$. In the extension $S_n[t]$, we also introduce the coordinates
$x_i$ defined by $x_0 = t$ and $x_i = y_i + t$ for $1 \leq i \leq n$. In
addition, for $n < i \leq m$, suppose we have elements $x_i \in S_n[t]$ such
that:
\begin{enumerate}
\item Each~$x_i$ is defined as one of $x_i = x_j + x_k$, $x_i = x_j
x_k$, or $x_i = x_j + 1$, where $j, k < i$.
\item Each $x_i$ is monic as a polynomial in~$t$ with coefficients in~$S_n$.
\end{enumerate}
The coordinates $x_i$ for $1 \leq i \leq n$ will be called \defi{free variables}
and the three operations for defining new variables in~(1) will be called
\defi{addition}, \defi{multiplication}, and \defi{incrementing}, respectively.

Moreover, we suppose we have finite sets of equalities~$E$ and inequalities~$I$
consisting of
pairs $(i,j)$ such that $x_i - x_j$ is in $S_n \subset S_n[t]$. 
We then say that the algebra:
\begin{equation*}
S_n[(x_{i'}-x_{j'})^{-1}]_{(i',j')\in I}/
\langle x_i - x_j \mid (i, j) \in E\rangle
\end{equation*}
has an \defi{elementary monic representation} consisting of the above
data, namely, the
integers~$n$ and~$m$, the expression of each $x_i$ as an addition,
multiplication, or increment for $n < i \leq m$, and the sets of equalities and
inequalities.
\end{defn}

The inequalities $I$ in Definition~\ref{def:elementary-monic} are not strictly
necessary because an inverse to $x_i-x_j$ can always be introduced as a
new variable, but the direct use of inequalities may be more efficient, such as
in the proof of Theorem~\ref{thm:quantitative-characteristic}.

\begin{prop}\label{prop:monic-representation}
There exists an elementary monic representation of any finitely generated
$\ZZ$-algebra.
\end{prop}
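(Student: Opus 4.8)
The plan is to reduce the statement to simulating integer arithmetic, and then to control the degree in~$t$ with a bookkeeping trick. First I would dispose of localizations: since $R[1/f] = R[z]/(zf-1)$, after adjoining finitely many new generators and relations any finitely generated $\ZZ$-algebra becomes $A = \ZZ[u_1,\dots,u_c]/\langle g_1,\dots,g_r\rangle$. Splitting each $g_\ell$ as $g_\ell = g_\ell^+ - g_\ell^-$ with $g_\ell^{\pm}\in\ZZ_{\ge 0}[u_1,\dots,u_c]$, it is enough to be able to build the polynomials $g_\ell^{\pm}$ out of the $u_i$ and the constant~$1$ using only additions, multiplications, and increments, and then to impose the equalities $g_\ell^+ = g_\ell^-$ via the set~$E$. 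Accordingly I would set $n = c$ and identify the free variables $y_i$ with the~$u_i$.

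The key device is to represent an element $a\in S_n = \ZZ[u_1,\dots,u_c]$ \emph{at level~$k$} by an ordered pair $(P,Q)$ of polynomials in $S_n[t]$, each monic in~$t$ of degree $k\ge 1$, with $P - Q = a$. Thus $u_i$ is represented at level~$1$ by $(x_i,x_0) = (t+u_i,\,t)$, and $1$ is represented at level~$1$ by $(x_0+1,\,x_0) = (t+1,\,t)$. Moreover, $a$ can be \emph{promoted} from level~$k$ to any larger level~$k'$ by replacing $(P,Q)$ with $(P + x_0^{k'},\,Q + x_0^{k'})$: the power $x_0^{k'}$ is built by repeated multiplication, the two additions are legal because the summands have distinct $t$-degrees, and the difference $P-Q=a$ is unchanged.

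Next I would simulate the three operations on such representations. If $a$ and $b$ are carried at levels~$k$ and~$l$, then after promoting one of them I may assume $k\neq l$, and then $a+b$ is represented at level $\max(k,l)$ by $(P_a+P_b,\,Q_a+Q_b)$, each summand being monic precisely because the degrees differ. For multiplication I would keep $b$ at two levels: its original level~$l$, with pair $(P_b,Q_b)$, and, after promotion, a higher level~$l'$ with pair $(P_b',Q_b')$; then
\[
ab \;=\; (P_aP_b + Q_aQ_b') \;-\; (P_aQ_b + Q_aP_b'),
\]
which holds because $P_b - Q_b = P_b' - Q_b' = b$, and where both sides are monic of degree $k+l'$ since $k+l' > k+l$. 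Incrementing~$a$ is simply $(P_a+1,\,Q_a)$. Running these recipes on a straight-line program computing the $g_\ell^{\pm}$ introduces a sequence of new variables, each defined by an addition, multiplication, or increment of earlier variables, and each monic in~$t$ by construction, so conditions (1) and~(2) of Definition~\ref{def:elementary-monic} hold. Finally, having produced $g_\ell^+$ with pair $(P,Q)$ and $g_\ell^-$ with pair $(P',Q')$ at levels I may arrange to differ, I set $x_{a_\ell} = P + Q'$ and $x_{b_\ell} = P' + Q$; these are monic, and $x_{a_\ell} - x_{b_\ell} = (P-Q) - (P'-Q') = g_\ell \in S_n$. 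Taking $E = \{(a_\ell,b_\ell)\}_{\ell}$ and $I = \emptyset$, the algebra of Definition~\ref{def:elementary-monic} is then $\ZZ[u_1,\dots,u_c]/\langle g_1,\dots,g_r\rangle = A$, as desired.

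The only genuine obstacle is keeping everything monic: the naive operations on the degree-one polynomials $t+a$ fail at once, since $(t+a)+(t+b) = 2t+(a+b)$ is not monic over~$\ZZ$. Representing values as differences of monic polynomials, and padding with powers of~$t$ to force distinct degrees before every addition, is exactly what circumvents this; the shift by~$t$ built into $x_i = y_i + t$ is what makes multiplication monic automatically (so that $x_ix_j = t^2 + (y_i+y_j)t + y_iy_j$) and supplies the level-one starting representations. Since the proposition only asserts existence, no optimization is needed here; the explicit count of the variables this recipe produces is what will be recorded when proving the quantitative statement.
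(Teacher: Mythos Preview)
Your argument is correct, and it takes a genuinely different route from the paper's proof. Both begin the same way: present the algebra as a quotient of a polynomial ring, split each relation into a difference of polynomials with non-negative coefficients, and build those polynomials from the $x_i$ by elementary operations. The divergence is in how the monicity constraint and the requirement $x_i - x_j \in S_n$ are handled.

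The paper substitutes $x_i = y_i + t$ directly into a straight-line computation of each $f_k$ and $g_k$, padding with a power of~$t$ whenever two same-degree monics must be added. This produces $x_i, x_j$ that agree with $f_k, g_k$ \emph{modulo}~$t$, but whose difference still involves~$t$; the paper then runs a double induction (on the $y$-degree and the number of top-degree terms) to cancel those stray $t$-terms by adding correction monomials $t^s x_1^{a_1}\cdots x_n^{a_n}$.

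Your pair representation $(P,Q)$ with $P - Q = a$ maintains the invariant $P - Q \in S_n$ from the outset, so no cleanup step is needed at the end: the equality pair $(P+Q',\,P'+Q)$ already has difference $g_\ell \in S_n$. The cost is that multiplication requires the four-term identity $ab = (P_aP_b + Q_aQ_b') - (P_aQ_b + Q_aP_b')$ and a second copy of~$b$ at a higher level, so each multiplication spawns more auxiliary variables than in the paper. For the proposition this is immaterial, since only existence is claimed; for the quantitative application in Theorem~\ref{thm:quantitative-characteristic}, the paper's more parsimonious encoding matters, which is presumably why it tolerates the extra inductive argument.
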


\begin{proof}
We begin by presenting the $\ZZ$-algebra~$R$ as
\begin{equation*}
R = \ZZ[y_1, \ldots, y_n]/\langle f_1 - g_1, \ldots, f_m - g_m \rangle,
\end{equation*}
where each polynomial~$f_k$ and~$g_k$ has positive integral coefficients.
Then $f_k$ and~$g_k$ can be constructed by a sequence of
multiplication and addition operations applied to the variables~$y_i$ and the
constant~$1$. Obviously, we can assume that our multiplication never involves
the constant~$1$. To get an elementary monic representation, we first replace
the variables $y_i$ with $x_i = y_i + t$ in the constructions of $f_k$
and~$g_k$, with some adjustments, as follows. Since Definition~\ref{def:elementary-monic} does not allow addition
of $1$ with itself, we replace such operations by first introducing a new
variable $x_i = x_0 + 1 = t + 1$ and then adding $1$ to $x_i$. Similarly, in
order to satisfy the second condition of Definition~\ref{def:elementary-monic},
when adding two variables $x_i$ and~$x_j$ which are both monic of the same
degree $d$ in $t$, we first compute an intermediate $x_{i'} = t^{d+1} + x_i$ and
then the sum $x_{i'} + x_j = t^{d+1} + x_i + x_j$. In this way, we ensure that
all of the $x_i$ variables are monic in~$t$. Moreover, $x_i$ and~$x_j$ agree
with $f_k$ and~$g_k$, respectively, modulo~$t$, but in order to be able to
have an equality $x_i = x_j$ in an elementary monic representation, the
difference $x_i - x_j$ have to not involve $t$.

Therefore, we want to replace $x_i$ and $x_j$ by polynomials $x_i'$ and $x_j'$
which agree with $x_i$ and $x_j$ modulo $t$, but such that the difference $x_i'
- x_j'$ does not involve $t$. We do this by double induction, first,
on the maximum total degree in the $y$ variables of the terms of $x_i - x_j$
that involve $t$, and second, on the number of terms of that degree.

Thus, we suppose that $c t^s y_1^{a_1} \cdots y_n^{a_n}$ is a term of $x_i -x_j$
whose total degree in the $y$ variables is maximal among terms with $s > 0$.
By swapping $i$ and~$j$ if necessary, we can assume that $c$ is
positive. We then use multiplication operations to construct:
\begin{equation*}
x_\ell = t^s x_1^{a_1} \cdots x_n^{a_n}
= t^s (t + y_1)^{a_1} \cdots (t + y_n)^{a_{n}}
= t^s y_1^{a_1} \cdots y_m^{a_m} + \ldots,
\end{equation*}
where the final ellipsis denotes omitted terms with lower degree in the $y$
variables. First suppose that $x_j$ and $x_\ell$ have different degrees
in the $t$
variable. Then, we can use $c$ addition operations to construct $x_{j'} = x_j +
c x_\ell$, and we set $i' = i$. On the other hand, if $x_j$ and $x_\ell$
have the same degree in~$t$, then let $d$ be an integer larger than the
$t$-degree of $x_i$, $x_j$, and $x_\ell$, and we use additions to construct
$x_{j'} = x_j + c x_\ell + t^d$ and $x_{i'} = x_i + t^d$. In either case,
$x_{i'}$ and $x_{j'}$ equal $x_i$ and $x_j$, respectively, modulo~$t$. Moreover,
the term $c t^s y_1^{a_1} \cdots y_m^{a_m}$ has been eliminated from $x_{i'} -
x_{j'}$, while only
introducing new terms which have lower degree in the $y$ variables. Thus,
by induction, we can eliminate all terms of $x_i - x_j$ which involve $t$ and
have maximal total degree in the $y$ variables among such terms, and by the
second level of induction, we can eliminate such terms in all degrees. We
therefore have constructions of variables $x_{i''}$ and $y_{j''}$ such that
$x_{i''} - x_{j''} = f_k - g_k$. Setting these equal for $k
= 1, \ldots, m$ gives us the elementary monic representation of $R$.
\end{proof}

Given a matroid~$M$, its possible realizations form a scheme, called the
realization space of the matroid~\cite[Sec.~9.5]{katz}. Explicitly, given a
rank~3
matroid with $n$ elements, each flat of the matroid defines a closed,
determinantal condition in $(\PP^2_\ZZ)^n$ and each triple of elements which is
not in any flat defines an open condition by not being collinear. The
\defi{realization space} is the quotient by $\PGL_3(\ZZ)$ of the
scheme-theoretic intersection of these conditions. We will only consider the
case when this action is free, in which case the quotient will be an affine
scheme over $\Spec \ZZ$.

\begin{thm}[Mn\"ev universality]\label{thm:mnev}
For any finite-type $\ZZ$-algebra~$R$, there exists a rank~$3$ matroid~$M$ whose
realization space is an open subset $U \subset \mathbb A^N \times \Spec R$ such
that $U$ projects surjectively onto $\Spec R$.

Moreover, if $R$ has an elementary monic representation with $n$~free variables,
$a$~additions, $m$~multiplications, $o$ increments, $e$~equalities, and
$i$~inequalities, then $M$ has
\begin{equation*}
3n + 7a + 7o + 6m + 5e + 6i + 6
\end{equation*}
elements, and
\begin{equation*}
N = 3(n + a + o + m + e + i) + 1.
\end{equation*}
\end{thm}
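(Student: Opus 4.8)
The plan is to build the matroid $M$ from an elementary monic representation of $R$ in a modular fashion, one gadget per operation, and to bookkeep the element count and the number of free parameters $N$ as we go. I would start by fixing a ground line $L$ in $\PP^2$ together with three reference points on it --- the points corresponding to $x_0 = t$, to $\infty$, and to $0$ --- which serve to set up a von Staudt-style ``algebra of coordinates'' on $L$: a point $p_c$ on $L$ encodes the scalar $c$ via cross-ratio with the three reference points. Each free variable $x_i = y_i + t$ then contributes one point $p_{x_i}$ on $L$, whose position is unconstrained, accounting for the $n$ free variables; together with the construction points off of $L$ needed to pin down the reference frame, this is where the $3n + \ldots + 6$ summand and the leading $3(n+\cdots)+1$ in $N$ begin to take shape. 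The key point of the monic normalization from Definition \ref{def:elementary-monic} is that, because every $x_i$ is monic in $t$, the difference $x_i - x_j$ lies in $S_n$ exactly when it makes sense to place $p_{x_i}$ and $p_{x_j}$ on the common line $L$ and compare them there; this is precisely what lets the combinatorial gadgets operate on the affine coordinates $y_i$ rather than on the full polynomials.

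Next I would describe the four arithmetic gadgets. For an addition $x_i = x_j + x_k$, a multiplication $x_i = x_j x_k$, an increment $x_i = x_j + 1$, and for an imposed equality or inequality, I would use the standard ``more efficient building blocks'' of the kind in \cite{lee-vakil}: small configurations of auxiliary points and lines, all but a bounded number of which are free to move, such that the incidence pattern forces the coordinate of the output point to be the prescribed function of the input coordinates. The numbers $7a$, $6m$, $7o$, $5e$, $6i$ are exactly the element counts of these gadgets (new points added per operation, after reusing the input points), and each gadget contributes $3$ to $N$ --- three being the number of genuinely new degrees of freedom (the position of a point in $\PP^2$ is $2$-dimensional, but the quotient by $\PGL_3$ has already been spent on the reference frame, so the accounting works out to the stated $N = 3(n+a+o+m+e+i)+1$). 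I would then verify that the scheme-theoretic realization space of the resulting $M$ is cut out inside $(\PP^2_\ZZ)^{\text{(number of points)}}$ by exactly the determinantal/non-collinearity conditions encoding these incidences, that the $\PGL_3$-action is free (using the fixed reference frame), and that the quotient is therefore the open subscheme $U$ of $\mathbb A^N \times \Spec R$ obtained by remembering the free variables, the internal parameters of each gadget, and the open conditions coming from the inequalities; surjectivity of $U \to \Spec R$ holds because given any point of $\Spec R$ one can solve for the gadget parameters.

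The main obstacle, and the part requiring the most care, is the explicit design of the multiplication and addition gadgets so that (i) the element counts are exactly $6$ and $7$ respectively --- tighter than Lafforgue's original construction, which is what ultimately feeds into the bound $p \geq 443$ in Theorem \ref{thm:quantitative-characteristic} --- and (ii) the incidence conditions impose \emph{no spurious constraints} in \emph{any} characteristic, so that the realization space is genuinely $U$ and not some proper subscheme of it (this is where Lafforgue's refinement over $\Spec \ZZ$, as opposed to Mn\"ev's over a field of characteristic $0$, is essential). In particular I would need to check that the gadgets degenerate gracefully at the special reference points (e.g. that multiplication still behaves when one factor is $0$ or $1$) and that no gadget secretly forces a characteristic restriction; the monic representation is set up so that all the coordinates being manipulated avoid the ``bad'' values, which is what makes this possible. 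Once the gadgets are pinned down, the element count is a direct sum over the operations, and the formula for $N$ follows by counting fibre dimensions, so the remainder of the proof is bookkeeping rather than ideas.
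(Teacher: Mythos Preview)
Your outline --- build $M$ gadget-by-gadget from an elementary monic representation and sum the contributions --- matches the paper's strategy, but the architecture you describe for encoding the variables is not the one that yields the stated bounds. You place all variable points $p_{x_i}$ on a \emph{single} ground line $L$. The paper instead gives each variable (including $x_0 = t$) its \emph{own} horizontal line through a common point at infinity, carrying three further marked points (a ``$0$'', a ``$1$'', and the variable point) whose cross-ratio with the point at infinity encodes $x_i$. That is why each free variable contributes exactly $3$ new elements and $3$ free parameters, matching the $3n$ term in both the element count and in $N$; a single point on a fixed line contributes only one of each, and your parenthetical justification for the number $3$ (``a point in $\PP^2$ is $2$-dimensional, but the $\PGL_3$ quotient\ldots'') does not actually account for it. The separate-line design also sidesteps a problem your scheme faces: if two variables happen to take the same value at some point of $\Spec R$, their points would coincide on $L$ and the matroid would degenerate there.

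You have also misread the role of the monic condition. The requirement $x_i - x_j \in S_n$ applies only to the pairs in $E \cup I$, where one needs the difference to be a well-defined element of $S_n$ in order to impose it as an (in)equality; it is not a general feature of the representation and is not what would let variables share a line. The purpose of monicity (condition~(2) in Definition~\ref{def:elementary-monic}) is different: in an addition $x_i = x_j + x_k$, monicity of $x_i$ forces $x_j$ and $x_k$ to have different $t$-degrees, so $x_j - x_k$ is a nonzero polynomial in $t$, and a \emph{generic} choice of $t$ avoids the coincidence $x_j = x_k$ that would collapse the addition gadget. The same device handles all the ``bad values'' you worry about at the end. This is precisely where the extra parameter $t$ enters: it is the ``$+1$'' in $N$, and surjectivity of $U \to \Spec R$ is proved by, over any given point of $\Spec R$, choosing $t$ outside the finitely many degenerate values.
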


\begin{proof}
By Proposition~\ref{prop:monic-representation}, we can assume that $R$ has an
elementary monic representation. Both the matroid and its potential
realization will be built up from the elementary monic
representation, beginning with the free variables and then applying the
addition, multiplication, and increment operations. We describe the
constructions of both the matroid and the realization in parallel for ease of
explaining their relationship.

We begin with the free variables. For $x_0 = t$ and for each free variable~$x_i$
of the representation, we have a line, realized generically, passing through a
common fixed point. In the figures below, we will draw these horizontally so
that the common point is at infinity. On each of these lines we have 3
additional points, whose positions along the line are generic. Our convention
will always be that points whose relative position is not specified are generic.
In other words, unless otherwise specified to lie on a line, each pair of points
correspond to a 2-element flat.

From each set of $4$~points on one of these free variable lines, we can take the
cross-ratio, which is invariant under the action of $\PGL_3(\ZZ)$. Therefore, by
taking the cross-ratio on each line as the value for the corresponding
coordinate~$x_i = t+ y_i$ or $x_0 = t$, we define a morphism from the
realization space of the matroid defined thus far to $\mathbb A^{n+1} = \Spec
S_n[t]$, where $S_n = \ZZ[y_1, \ldots, y_n]$ as
in Definition~\ref{def:elementary-monic}. Our goal with the remainder of the
construction is to constrain the realization such that the projection to $\Spec
S_n$ is surjective onto $\Spec R \subset \Spec S_n$.

Concretely, the cross-ratio is the position of one point on
the line in coordinates where the other points are at $0$, $1$, and~$\infty$.
For us, the point common to all variable lines will be at~$\infty$, so we will
refer to the other points along the line as the ``$0$'' point, the ``$1$''
point, and the variable point. We will next embed the operations of addition,
multiplication, and incrementing from the elementary monic representation. The
result of each of these operations will be encoded as the cross-ratio of $4$ points on a
generic horizontal line, in the same way as with the free variables.

\begin{figure}
\includegraphics{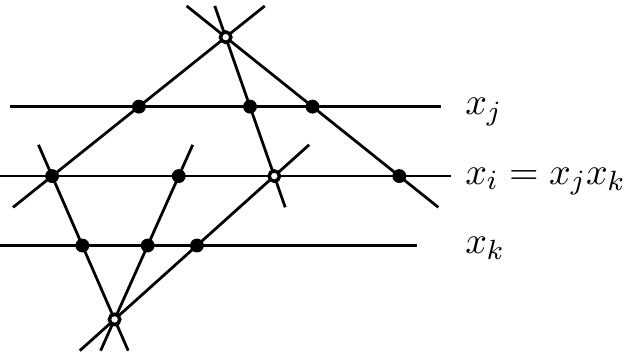}
\caption{Configuration for computing multiplication. The cross-ratios of the
solid circles, together with the horizontal point at infinity define the
variables. On each line, the solid circles are, from left to right, the ``$0$''
point, the variable point, and the ``$1$'' point of the cross-ratio defining the
variable. The empty circles are auxiliary points. }
\label{fig:multiplication}
\end{figure}
First, multiplication of distinct variables $x_i = x_j x_k$ is constructed as in
Figure~\ref{fig:multiplication}, where the $x_j$ and $x_k$ lines refer to the
lines previously constructed for those variables and the other points are new.
We can choose the horizontal line for $x_i$ as well as the additional points
generically so that there none are collinear with previously constructed points.
Then, one can check that the cross-ratio of the solid points on the central line
is the product of the cross-ratios on the other two lines. Set-theoretically,
this claim follows from the fact that projections between parallel lines
preserve ratios of distances, and so measuring from the leftmost point of the
$x_i$ line, the top projection ensures that the ratio between the distances to
the empty circle and the rightmost circle is $x_j$. Likewise, the lower
projection ensures that the ratio of the distances to the center solid circle
and the empty circle is $x_k$ and so $x_i = x_jx_k$ appears as the product of
the ratios. If $j$ equals~$k$,
the diagram may be altered by moving the corresponding lines so that they
coincide. In either case, the construction uses $6$ additional points.

\begin{figure}
\includegraphics{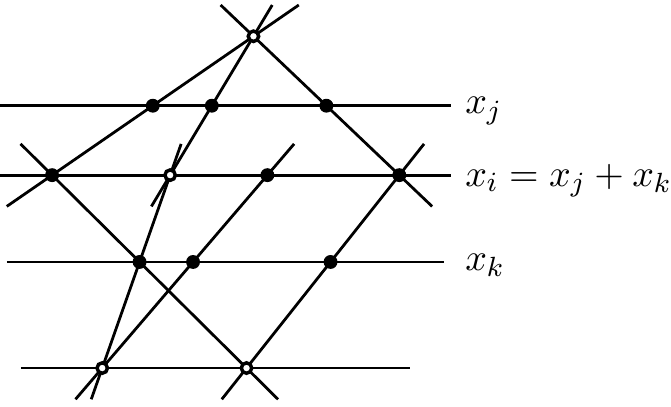}
\caption{Configuration for computing addition. The solid and empty circles
represent the variables and auxiliary points respectively, as in
Figure~\ref{fig:multiplication}.}
\label{fig:addition}
\end{figure}
Second, the addition of variables $x_i = x_j + x_k$ can be constructed as in
Figure~\ref{fig:addition}. As in the case of multiplication, the motivation for
this construction can be understood from the fact that projections scale
distances. In particular, the top projection means that the empty circle and the
outer points on the $x_i$ line encode the variable~$x_j$, and since the two
lower projections are both from points on the same horizontal line, the ratio of
the distance between the inner points on the $x_i$ line to the distance between
the outer points equals the value of~$x_k$. Therefore, the intervals on either
side of the empty circle encode the values of~$x_j$ and $x_k$ and their
concatenation computes $x_i = x_j + x_k$.

In the configuration from Figure~\ref{fig:addition},
there will be an additional coincidence if $x_j = x_k$ in
that the empty circle on the $x_i$ line, the middle point on the
$x_k$ line and a point on the bottom line will be collinear. However,
since $x_j + x_k$,
$x_j$, and~$x_k$ are all monic polynomials in the variable $t$, $x_j - x_k$ is
also monic in~$t$ and so a sufficiently generic choice of $t$ will ensure that
$x_j - x_k$ is non-zero.
Similarly if $x_j = -1$, then the empty point on the $x_i$ line, the rightmost
point on the $x_k$ line and a marked point on the bottom line will be collinear,
but this can again be avoided by adjusting~$t$.
For the addition operation,
we've used $7$ additional points.

\begin{figure}
\includegraphics{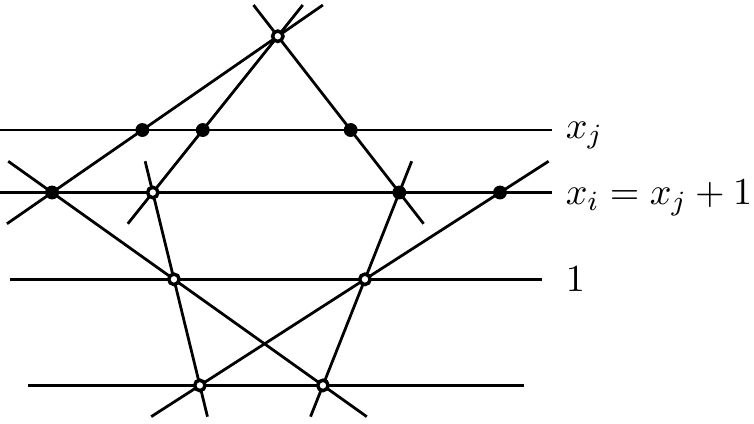}
\caption{Configuration for incrementing. The solid and empty circles represent
the variables and the auxiliary points respectively, as in
Figure~\ref{fig:multiplication}, with the exception that, on the $x_i$ line, the
variable point is the rightmost solid point (since $x_i > 1$).}
\label{fig:increment}
\end{figure}
Third, for incrementing a single variable, $x_i = x_j + 1$, we specialize the
configuration in Figure~\ref{fig:addition} so that $x_k = 1$, giving
Figure~\ref{fig:increment}. The line labeled with~$1$ can be chosen once and
used in common for all increment operations, since it functions as a
representative of the constant~$1$. We've used 7 additional points for each
increment operation, together with $2$ points common to all such operations.

\begin{figure}
\includegraphics{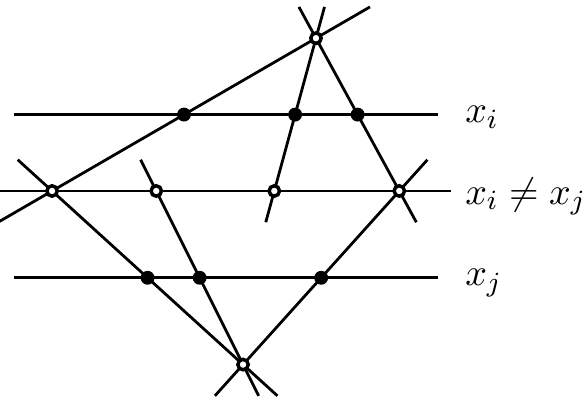}
\caption{Configuration for imposing inequality. The solid and empty circles
represent the variables and auxiliary points respectively, as in
Figure~\ref{fig:multiplication}.}
\label{fig:inequality}
\end{figure}
At this point, the realization space still surjects onto $\Spec S_n$, and so
we still need to impose the equalities and inequalities. Each inequality $x_i
\neq x_j$ can
be imposed using the diagram in Figure~\ref{fig:inequality}, which works by
projecting the two variable points to the same line and getting different
points. By replacing the projections of the two variables to the central line
with the same point, we can use a similar figure to assert equality $x_i = x_j$.
These use $6$ and $5$ additional points respectively.

To summarize, we have agglomerated the configurations in
Figures~\ref{fig:multiplication}, \ref{fig:addition}, \ref{fig:increment},
and~\ref{fig:inequality} to give a matroid whose realization space projects to
$\Spec R \subset \Spec S_n$. The realization of this matroid is determined by
the values of the $y_i$, together with a number of parameters, such as the
height of the horizontal lines, which are allowed to be generic, and thus
the realization space is an open subset of $\mathbb A^N \times \Spec R$.

To show that the projection to $\Spec R$ is surjective, we take any point of
$\Spec R$, which we can assume to be defined over an infinite field. Our
construction of the realization required us to avoid certain coincidences, such
as any $x_i$ being $0$ or~$1$ or an equality $x_j = x_k$ in any addition step.
Each such coincidence only occurs for a finite number of possible values for~$t$
we choose $t$ outside the union of all coincidences, and we can construct a
realization of the matroid.

Finally, we justify the quantitative parts of the theorem statement. The number
of elements of $M$ is computed by summing the number of elements for each of the
building blocks together with $1$ element for the common point on the horizontal
lines, $3$ elements for the variable $x_0 = t$, and $2$ elements for the horizontal line
representing~$1$ in Figure~\ref{fig:increment}.
For the computation of~$N$, we can assume that the coordinates on $\PP^2$
are such that the common point of the horizontal lines is $(1:0:0)$, the points
representing~$1$ are $(0:0:1)$ and $(1:0:1)$, and the ``$0$'' and~``$1$'' points of the $x_0 =
t$ line are $(0:1:0)$ and $(0:1:1)$ respectively. These fix the automorphisms of~$\PP^2$.
Then, one can check that each additional free variable and
each of the building blocks adds $3$~additional generic parameters. Finally, the
value of~$t$ is one more free parameter, which gives the expression for~$N$.
\end{proof}

\begin{figure}
\begin{align*}
x_0 &= t \\
x_1 &= x_0 x_0 = t^2 \\
x_2 &= x_0 x_1 = t^3 \\
x_3 &= x_0 + 1 =  t + 1 \\
&\quad\vdots \\
x_{\ell+2} &= x_{\ell+1} + 1 = t + \ell \\
x_{\ell+3} &= x_1 + x_{\ell+1} = t^2 + t + \ell \\
x_{\ell+4} &= x_{\ell+3} + x_{\ell+1} = t^2 + 2t + 2\ell \\
x_{\ell+5} &= x_0 x_{\ell+4} = t^3 + 2t^2 + 2\ell t \\
x_{\ell+6} &= x_{\ell+2} x_{\ell+2} = t^2 + 2\ell t + \ell^2 \\
x_{\ell+7} &= x_{\ell+6} + 1 = t^2 + 2\ell t + \ell^2 + 1 \\
&\quad\vdots \\
x_{\ell+p-\ell^2 + 6} &= x_{\ell+p-\ell^2+5} + 1 = t^2 + 2\ell t + p \\
x_{\ell+p-\ell^2+7} &= x_2 + x_{\ell+p-\ell^2+6} = t^3 + t^2 + 2\ell t + p \\
x_{\ell+p-\ell^2+8} &= x_1 + x_{\ell+p-\ell^2+7} = t^3 + 2t^2 + 2\ell t + p \\
\end{align*}
\caption{System of equations used in the elementary monic representations of
$\ZZ/p$ and $\ZZ[p^{-1}]$, where $p$ is a prime and
$\ell$ denotes the largest integer less than
$\sqrt{p}$.}\label{fig:elem-monic}
\end{figure}
\begin{proof}[Proof of Theorem~\ref{thm:quantitative-characteristic}]
By Theorems~\ref{thm:matroid-lifting}, \ref{thm:construction},
and~\ref{thm:mnev}, it will be enough to construct sufficiently
parsimonious elementary monic representations of the algebras $\ZZ/p$ and
$\ZZ[p^{-1}]$
and thus matroids $M$ and~$M'$, respectively, representing these
equations. Let $\ell$ be the largest integer less than $\sqrt{p}$. The
elementary monic representations for both $M$ and~$M'$ use the equations shown
in Figure~\ref{fig:elem-monic}.
Then, $\ZZ/p$ can be represented by adding an equality between $x_{\ell+5}$ and
$x_{\ell+p-\ell^2+8}$ and $\ZZ[p^{-1}]$ can be represented by an inequality
between the same pair of variables.

In either case, this representation uses no free variables, $\ell +
p-\ell^2$ increments, $4$ additions, and $4$ multiplications. Thus, by
Theorem~\ref{thm:mnev}, $M$ and~$M'$ have $7(\ell+p-\ell^2) + 64$ and
$7(\ell+p-\ell^2) +
63$ elements respectively. We'll bound the former since it is larger. We first
rewrite the number of elements as
\begin{equation}\label{eq:num-elements}
7(\ell+p-\ell^2) + 64 = p-\ell^2 + 7\ell + 6(p-\ell^2) + 64
\end{equation}

To show that (\ref{eq:num-elements}) is smaller than $p$, we note that since $p
\geq 443$, then $\ell \geq 21$. We now have two cases. First, if $\ell = 21$,
then the largest prime number less than $22^2$ is $479$, so $p-n^2 \leq 38$.
Using this, we can bound (\ref{eq:num-elements}) as 
\begin{equation*}
p - 21^2 + 7\cdot 21 + 6(38) + 64 = p - 2 < p
\end{equation*}
On the other hand, if $\ell \geq 22$, then the choice of $\ell$ means that $p <
(\ell + 1)^2$, so $p - \ell^2 \leq 2 \ell$. Therefore, we can bound
(\ref{eq:num-elements}) as follows:
\begin{align*}
p - \ell^2 + 7\ell + 6(2\ell) + 64
&= p - \ell^2 + 19\ell + 64 \\
&= p - (\ell-19)\ell + 64 \\
&\leq p - 2 < p
\end{align*}
Thus, the number of elements of $M$ and~$M'$ is less than~$p$.

We take the graphs $\Gamma$ and $\Gamma'$ and the divisors $D$ and~$D'$ for the
theorem statement to be the matroid divisors of~$M$ and~$M'$ respectively. Since
$M$ and~$M'$ have fewer than $p$ elements, $D$ and~$D'$ have degree less
than~$p$. Moreover, since $k$ is an infinite field, by
Theorems~\ref{thm:matroid-lifting} and~\ref{thm:construction}, $\Gamma$ and~$D$
lift over $k[[t]]$ if and only if $M$ is representable over~$k$, which means
that the characteristic of~$k$ equals~$p$. Similarly, $\Gamma'$ and $D'$ lift if
and only if the characteristic of $k$ is not~$p$.
\end{proof}

\begin{rmk}
The threshold for~$p$ in Theorem~\ref{thm:quantitative-characteristic} is not
optimal. For example, by using a different construction when $p$ is closer to a
larger square number than to a smaller square, it is
possible to reduce the bound to~$331$.
\end{rmk}

\bibliographystyle{alpha}
\bibliography{lifting}

\end{document}